\theoremstyle{plain}
\newtheorem{theorem}{Theorem}[section]
\newtheorem{lemma}[theorem]{Lemma}
\newtheorem{proposition}[theorem]{Proposition}
\theoremstyle{definition}
\newtheorem{definition}[theorem]{Definition}
\newtheorem{remark}[theorem]{Remark}
\numberwithin{equation}{section}
\newcommand{\R}{{\mathbb R}}
\newcommand{\be}{\begin{equation}}
\newcommand{\ee}{\end{equation}}
\newcommand{\bes}{\begin{eqnarray}}
\newcommand{\ees}{\end{eqnarray}}
\newcommand{\setR}{\mathbb{R}}
\newcommand{\setN}{\mathbb{N}}
\newcommand{\prb}{\mathcal{P}}
\newcommand{\meas}{\mathfrak{M}_+}
\newcommand{\cadlag}{\mathrm{D}}
\newcommand{\dd}{\, \mathrm{d}} 
\newcommand{\diff}{\mathop{}\!\mathrm{d}}
 \newcommand{\sgn}{\operatorname{sgn}}
\newcommand{\supp}{\operatorname{supp}}
\newcommand{\e}{\mathrm{e}}
\newcommand\etc{etc\@ifnextchar.{}{.\@\xspace}}
\newcommand\ie{i.e.\@ifnextchar,{}{\@\xspace}}
\newcommand\eg{e.g.\@ifnextchar,{}{\@\xspace}}
\newcommand\wrt{w.r.t.\@ifnextchar,{}{\@\xspace}}
\begin{document}
\title[Power Repulsion and Attraction]{Asymptotic Behavior of Gradient Flows Driven by Nonlocal Power Repulsion and Attraction Potentials in One Dimension}
\author[M.~Di Francesco]{Marco Di Francesco}
\address{Marco Di Francesco \\ Mathematical Sciences \\ University of Bath \\ Claverton Down \\ Bath, BA2 7AY \\ United Kingdom}
\email{m.difrancesco@bath.ac.uk}
\author[M.~Fornasier]{Massimo Fornasier}
\address{Massimo Fornasier \\ Zentrum Mathematik \\ Technische Universit\"at M\"unchen \\ Boltzmannstra\ss e 3 \\ D-85747 Garching \\ Germany}
\email{massimo.fornasier@ma.tum.de}
\author[J.-C.~H\"utter]{Jan-Christian H\"utter}
\address{Jan-Christian H\"utter \\ Department of Mathematics \\ Massachusetts Institute of Technology \\ 77 Massachusetts Avenue \\ Cambridge, MA 02139-4307 \\ USA}
\email{huetter@mit.edu}
\author[D.~Matthes]{Daniel Matthes}
\address{Daniel Matthes \\ Zentrum Mathematik \\ Technische Universit\"at M\"unchen \\ Boltzmannstra\ss e 3 \\ D-85747 Garching \\ Germany}
\email{matthes@ma.tum.de}

\begin{abstract}
  We study the long time behavior of the Wasserstein gradient flow for an energy functional consisting of two components:
  particles are attracted to a fixed profile $\omega$ by means of an interaction kernel $\psi_a(z)=|z|^{q_a}$,
  and they repel each other by means of another kernel $\psi_r(z)=|z|^{q_r}$.
  We focus on the case of one space dimension and assume that $1\le q_r\le q_a\le 2$.
  Our main result is that the flow converges to an equilibrium if either $q_r<q_a$ or $1\le q_r=q_a\le4/3$,
  and if the solution has the same (conserved) mass as the reference state $\omega$.
  In the cases $q_r=1$ and $q_r=2$, we are able to discuss the behavior for different masses as well,
  and we explicitly identify the equilibrium state, which is independent of the initial condition.
  Our proofs heavily use the inverse distribution function of the solution.
\end{abstract}

\subjclass[2010]{Primary 49K20; Secondary 45K05, 70F45}
\keywords{Wasserstein metric, Gradient flow, Convolution equation, Nonlocal aggregation, Image dithering}

\maketitle

\section{Introduction}
\label{cha:introduction}

\subsection{Setup and results}
We study existence, uniqueness and long-time behavior of non-negative weak solutions $\mu=\mu(t)$ of the PDE
\begin{equation}
  \label{eq:501}
  \partial_t\mu = \nabla \cdot \left[ \left(\nabla \psi_a \ast \omega - \nabla \psi_r \ast \mu \right)
    \mu\right], \quad \mu(0) = \mu_0,
\end{equation}
in one space dimension.
The functions $\psi_a,\psi_r : \setR \rightarrow \mathbb{R}$, which are given by
\begin{align}
  \label{eq:1}
  \psi_a(x) = |x|^{q_a}, \quad \psi_r(x)= |x|^{q_r},\quad \text{with parameters $q_a,\,q_r\in[1,2]$},
\end{align}
represent the \emph{attraction} and \emph{repulsion kernels}, respectively, 
and $\omega\in L^\infty(\setR)$ is a prescribed \emph{reference profile} of compact support.
Formally, equation \eqref{eq:501} is the gradient flow of the energy functional
\begin{equation}
  \label{contenergy}
  \mathcal{E}[\mu] = - \frac{1}{2} \int_{\mathbb R^d}\int_{\mathbb R^d} \psi_r(x-y)  \diff \mu(x) \diff \mu(y)  + \int_{\mathbb R^d} \int_{\mathbb R^d} \psi_a(x-y) \diff \omega(x) \diff \mu(y)
\end{equation}
with respect to the $L^2$-Wasserstein metric, for $d=1$.
A specific motivation to study $\mathcal E$ and its flow \eqref{eq:501} is given further below.
Although the flow could be considered in the general framework of measure solutions $\mu$,
we limit our analysis to absolutely continuous measures $\mu$ with bounded density of compact support.
Further, since \eqref{eq:501} is mass-preserving 
and invariant under simultaneous multiplication of $\mu$ and $\omega$ by a positive constant,
we may assume without loss of generality that $\mu(t)$ is a \emph{probability} measure.
By abuse of notation, we shall frequently identify $\mu$ with its density function.
\medskip

The heuristics behind the long-time behavior of $\mu(t)$  is the following.
In the \emph{attraction dominated case} $q_r<q_a$, the attracting force $\nabla\psi_a\ast\omega$ generated by the reference profile 
is stronger than the repulsion force $\nabla\psi_r\ast\mu$ between the particles on large space scales.
It is thus expected that particles remain at a bounded distance from $\omega$,
and that the system equilibrates at a localized stationary solution.
In the \emph{repulsion dominated case} $q_r>q_a$ the repulsive force dominates the attracting one at large distances 
and will cause initially ``sufficiently delocalized'' solutions to diverge as $t\to\infty$;
however, since attraction is stronger than repulsion at short distances,
there might be solutions that remain localized for all times.
In this paper, we are concerned with the \emph{attraction dominated} case $q_r<q_a$, 
and with the \emph{balanced} case $q_r=q_a$.

As a preliminary result, we prove well-posedness of the dynamics inside the set of probability measures with compactly supported and bounded density.
Since we are working in one space dimension, the evolution equation \eqref{eq:501} 
can be reformulated as an integro-differential equation for the inverse distribution function of $\mu$.
In that framework, we obtain existence and uniqueness using a fixed point argument, see Theorem \ref{thm:existence}.

The main part of the paper is then devoted to the long-time behavior of these solutions.
Depending on the value of the parameters $q_a$ and $q_r$, we obtain a more or less complete picture.
\begin{itemize}
\item For $q_r=q_a=2$, the behavior is threefold, see Theorem \ref{thm:q2}:
  if $\omega$ has mass larger than one, then $\mu(t)$ converges weakly to a Dirac measure concentrated at $\omega$'s center of mass;
  if $\omega $ is of mass smaller than one, then $\mu(t)$ converges vaguely to zero;
  finally, if $\omega$ is of unit mass, then each $\mu(t)$ is a translate of $\mu_0$,
  and $\mu(t)$'s center of mass converges to that of $\omega$ exponentially fast.
\item For $1=q_r<q_a\le2$, the solution $\mu(t)$ converges weakly to a steady state $\mu^\star$ 
  which is obtained by cutting off the positive function $\psi_a''\ast\omega$ in a symmetric way around its median to normalize its mass to one.
  See Theorem \ref{thm:q1} for details.
  In the more delicate case $q_a=1$, $\mu^\star$ is given by a symmetric cut-off of $\omega$ itself,
  provided that it has mass larger or equal to one;
  if $\omega$ is of smaller mass, then there exists no steady state (of unit mass), and $\mu(t)$ converges vaguely to $\omega$,
  losing excess mass towards infinity,
  see Theorem \ref{thm:q1bis}
\item If $1<q_r<q_a\le2$, or if $1<q_r=q_a<4/3$, and if $\omega$ has mass one, 
  then we prove weak convergence of $\mu(t)$ to some stationary solution $\mu^\star$, see Theorem \ref{thm:1q2}.
  Our proof is based on a compactness argument, and does not lead to an explicit characterization of the stationary state.
\end{itemize}
The detailed results in the special cases $q_r=1$ and $q_r=2$ are obtained by direct calculations with the inverse distribution function of $\mu(t)$.
The proof of equilibration for the more general situations with $1<q_r<q_a\le2$ or $1<q_r=q_a<4/3$ 
relies on subtle moment estimates for measures $\mu$ of finite energy.
These estimates have been recently derived by the second and third author,
and are published in a companion paper \cite{13-FornasierHuetter-VarProperties}.

\subsection{Motivation: image dithering}
Equation \eqref{eq:501} was introduced in \cite{FHS12} in the context of discrete variational methods for image dithering \cite{scgwbrwe11, testgwscwe11, grpost11}.
In particular, in \cite{testgwscwe11}, for $\psi_a(\cdot)=\psi_r(\cdot)=| \cdot|$ being the Euclidean norm, the authors considered the discrete energy functional
\begin{equation} \label{starting point}
  \mathcal E_N[p] := -\frac{1}{2 N^2} \sum_{i,j=1}^N  \psi_r(p_i - p_j) + \frac{1}{N}\sum_{i=1}^N \int_{\mathbb R^d} \omega(x) \psi_a(p_i- x) \diff x, 
\end{equation}
where the $N$ points $p_1,\ldots,p_N \in \mathbb R^d$ represent point masses that should approximate a given image $\omega$, 
which is assumed to be a compactly supported absolutely continuous probability measure.
The associated particle gradient flow $\partial_t p \in - \partial_p \mathcal E_N$ is of the form \eqref{eq:501}, where $\mu(t)$ is the sum of Dirac measures at the $p_j(t)$.
The minimizer of $\mathcal E_N$ places the $N$ points in an optimal way in the sense that 
points are concentrated on large values of the image $\omega$, but they are not too densely distributed. 
This behavior results from the balance between the second (attraction) term with the first (repulsion) term in the energy \eqref{starting point}.
It is shown that the energy functional is continuous and coercive, and, for $d=1$, it is possible to calculate its minimizers explicitly. 
In the two or higher dimensional settings, minimizers have been approximated numerically.
In \cite{grpost11} the data function $\omega$ was also considered 
on other sets than $\mathbb R^d$, such as $\mathbb T^d$ or $\mathbb S^2$, 
and kernels $\psi_a, \psi_r$ other than the Euclidean distance were used. 
When the number $N$ of particles is taken to $\infty$, one formally arrives at the continuous energy $\mathcal E$ given in \eqref{contenergy}.
The $\Gamma$-convergence of $( \mathcal E_N )_N$ to the lower-semicontinuous envelope of $\mathcal{E}$ with respect to the narrow topology 
has been shown in  \cite{13-FornasierHuetter-VarProperties}.

\subsection{Results from the literature}
\label{sec:well-posedness}
As indicated above, equation \eqref{eq:501} results as the mean-field limit of particle dynamics.
In \cite{FHS12} that limit process has been made rigorous under the well-known assumption $\psi_a,\psi_r\in C^{1,1}$.
Under this classical smoothness condition, the general gradient flow theory in probability spaces \cite{AGS08} is applicable to \eqref{eq:501},
yielding well-posedness of the initial value problem and contraction/expansion estimates on the flow.

If instead the repulsive kernel $\psi_r$ is of lower regularity (note that we only have $\psi_r\in C^{1,q_r-1}$), 
then well-posedness and particle approximation are difficult to analyze in general. 
Many results have been proven recently for \eqref{eq:501} and similar equations with non-smooth kernels.
The main focus has been on equations without external confinement ($\omega\equiv0$),
but with more general ``self-interaction'' kernels $\psi_r=-K$, i.e., 
the equation
\begin{align}
  \label{eq:K}
  \partial_t\mu = \nabla\cdot(\rho\,\nabla K\ast\rho)
\end{align}
is considered.
A typical choice for $K$ are combined repulsion-attraction kernels, 
that are repulsive at short distances but attractive at large distances.
We refer to \cite{13-Carrillo-Choi-Hauray-MFL} for a recent survey on the rapidly growing literature.
Below, we only mention very few selected contributions that are related to our own analysis.

In \cite{CDFLS}, a well-posedness theory has been developed mainly for attractive kernels $K$ 
that are symmetric and $C^1$-regular except possibly for the origin, and $\lambda$-convex
(see also \cite{CLM} for an extension to barely $\lambda$-convex potentials without any regularity assumption).
Existence and uniqueness are discussed for measure-valued solutions $\mu(t)$.
In fact, it is one of the key observations that despite the relatively high regularity imposed on $K$ by the $\lambda$-convexity assumption,
solutions $\mu$ to \eqref{eq:K} may generate point masses in finite time.
In \cite{belaro11}, convexity and smoothness hypotheses on $K$ have been relaxed,
and additional ``Osgood'' criteria have been formulated under which solutions remain absolutely continuous.
In this context, a solution theory in $L^p$ was developed.
This theory has been further extended in \cite{BCLR}, leading e.g.\ to existence and uniqueness of classical solutions
in the cases $K(x)\approx-|x|^\alpha$ with $\alpha>1$ near $x=0$ 
--- which corresponds to $\psi_r$ in \eqref{eq:1} with $1<q_r\le 2$ in our context.
The important special case $K(x)\approx-|x|$ has been analyzed in \cite{BS}, see also \cite{BCDP}.

In view of \cite{BCLR,BS}, our result on well-posedness of \eqref{eq:501} for the considered range of kernels (see Theorem \ref{thm:existence}) is not new.
Our motivation to include it here is that we also provide a short and quite elementary proof, based on the use of the inverse distribution function.
Our strategy of proof builds on that of \cite{BDi08}. 
Several non-trivial modifications are necessary to deal with the lack of smoothness of $\psi_r(x)$ at $x=0$.

Equilibration of solutions to equations of type \eqref{eq:501} in one spatial dimension has been investigated in a series of papers \cite{FR2,FR1,ra10};
an extremely rich theory in higher space dimensions is currently developing as well, see e.g.\ \cite{BCLR}.
Mainly, the shape and nonlinear stability of stationary states for \eqref{eq:K} have been analyzed 
for combined attractive-repulsive kernels $K$.
Under suitable hypotheses, there exist non-trivial steady states $\mu^\star$, even in the absence of external attraction forces.

The only result with direct relation to our own work is \cite[Theorem 1.2]{FR2}.
There, weak convergence of $\mu(t)$ towards a stationary state has been established (essentially) under the hypothesis 
that the kernel function $K:\setR\to\setR$ is convex on $\setR_+$ and can be written 
as the sum of a negative multiple of $|\cdot|$ and a $C^2$-smooth function.
This covers in particular the situation $\psi_r(z)=|z|$, i.e., $q_r=1$, considered here.
The essential improvement contained in our own result (see Theorem \ref{thm:q1}) is the explicit characterization of the steady state $\mu^\star$.

Long-time asymptotics for the general case of power-type kernels with exponents $1<q_r\le q_a\le 2$ (see Theorem \ref{thm:1q2}) 
have apparently not been addressed in the literature before.

\subsection{Structure of the paper}
\label{sec:structure-paper}

This paper is organized as follows. 
In Section \ref{sec:pseudo-inverse-techn} we recall some basic facts about the representation of positive finite measures by means of pseudo-inverse functions. 
This representation is our key technical tool, and it also motivates our notion of transient solutions for \eqref{eq:501}. 
Section \ref{sec:existence-theory-linfty} is dedicated to the result of well-posedness of the initial-value problem; its proof is given in Appendix \ref{sec:proof-theor-refthm:l}. 
The core of the paper is Section \ref{sec:asymptotics}, where we study the long time asymptotics of solutions.
The abstract convergence argument for \( q_r = q_a \in [1, 4/3) \) relies on subtle moment bounds obtained in \cite{13-FornasierHuetter-VarProperties}; 
for the sake of self-containedness, the derivation of these bounds is sketched in Appendix \ref{sec:moment-bound-subl}.

\subsection*{Notations}
We write $\meas(\setR)$ for the space of non-negative finite measures on the real line $\setR$,
and $\prb(\setR)$, $\prb_p(\setR)$ denote the subspaces of probability measures, and of probability measures with finite $p$th moment, respectively.
We shall consider $\prb_p(\setR)$ as a metric space, equipped with the $L^p$-Wasserstein distance $W_p$,
see Lemma \ref{lem:17} below for a definition.
By abuse of notation, we will frequently identify absolutely continuous measures with their density functions.
Two concepts of (weak) convergence on $\meas(\setR)$ will be used:
$\mu_n\to\mu$ \emph{narrowly} means that
\begin{align}
  \label{eq:narrow}
  \int_\setR\varphi(x)\dd\mu_n(x)\to\int_\setR\varphi(x)\dd\mu(x)  
\end{align}
holds for every bounded $\varphi\in C^0(\setR)$,
and $\mu_n\to\mu$ \emph{vaguely} means that \eqref{eq:narrow} is true for all $\varphi\in C^0(\setR)$ of compact support.
Note that $\mu_n(\setR)\to\mu(\setR)$ if $\mu_n\to\mu$ narrowly, but not necessarily if $\mu_n\to\mu$ only vaguely. 

Lebesgue spaces are denoted by $L^p(\setR)$, with $p\in[1,\infty]$, 
and $L^\infty_c(\setR)$ is the space of essentially bounded functions with compact support.
Finally, we denote by $\cadlag(I)$ the linear space of \emph{c\`{a}dl\`{a}g functions} over the interval $I\subseteq\setR$,
where either $I=\setR$ or $I=[a,b)$.
That is, $\cadlag(I)$ is the space of bounded functions $X:I\to\setR$, which are right continuous and have a left limit at every point. 
We endow $\cadlag(I)$ with the $\sup$-norm (\emph{not} the Skorokhod topology), which makes it a Banach space.

\section{The Pseudo-inverse}
\label{sec:pseudo-inverse-techn}

\subsection{Definition and elementary properties}
\label{sec:pseudo-inverse-elem}
In one spatial dimension, we can exploit a special transformation technique which makes equation \eqref{eq:501} much more amenable to estimates in the Wasserstein distance. More precisely, this distance can be explicitly computed in terms of pseudo-inverses.
\begin{definition}[CDF and Pseudo-Inverse]
  \label{def:cdf-pseudo-inverse}
  Given a non-negative finite measure $\mu\in\meas(\setR)$ on the real line, 
  we define its \emph{cumulative distribution function (CDF)} $F_\mu:\setR\to[0,\mu(\setR)]$ by
  \begin{equation*}
    F_\mu(x) := \mu((-\infty,\,x]), 
  \end{equation*}
  and its \emph{pseudo-inverse} $X_\mu:[0,\mu(\setR))\to\setR$ by
  \begin{equation*}
    X_\mu(z) := \inf \left\{x \in \mathbb{R} : F_\mu(x) > z\right\}.
  \end{equation*}
\end{definition}
By definition, $F_\mu$ is a c\`{a}dl\`{a}g function, and so is $X_\mu$, i.e., $X_\mu\in\cadlag([0,\mu(\setR)))$.
Note that in some cases, $X_\mu$ indeed is an inverse of $F_\mu$. 
Namely, if $F_\mu$ is \emph{strictly} monotonically increasing, corresponding to \(\mu\) having its support on the whole of $\mathbb{R}$, then \(X_\mu \circ F_\mu = \mathrm{id}\). 
If \(F\) is \emph{continuous}, which means that $\mu$ does not give mass to points, then \(F_\mu\circ X_\mu = \mathrm{id}\). 
However, in general we only have
\begin{equation}
  \label{eq:219}
  (X_\mu\circ F_\mu) (x) \geq x, \ x \in \mathbb{R}, \quad (F_\mu\circ X_\mu)(z) \geq z, \ z\in[0,\mu(\setR)).
\end{equation}
\begin{lemma}[Substitution formula]
  \label{lem:16} Given $\nu\in\meas(\setR)$, then, for all $f \in L^1(\setR;\nu)$,
  \begin{equation*}
    \int_\mathbb{R} f(x) \, \mathrm{d} \nu(x) = \int_0^{\nu(\setR)} f(X(z)) \, \mathrm{d} z.
  \end{equation*}  
\end{lemma}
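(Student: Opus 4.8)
The plan is to prove the substitution formula
\[
  \int_\setR f(x)\dd\nu(x) = \int_0^{\nu(\setR)} f(X(z))\dd z
\]
by the standard measure-theoretic bootstrapping: first verify it for indicator functions of half-lines, then extend by linearity and monotone/dominated convergence to all $f\in L^1(\setR;\nu)$. The heart of the matter is the change-of-variables identity relating $\nu$ and the Lebesgue measure on $[0,\nu(\setR))$ through the pseudo-inverse $X=X_\nu$.

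First I would establish the key pointwise relation between $X$ and $F:=F_\nu$. By Definition \ref{def:cdf-pseudo-inverse}, $X(z)=\inf\{x:F(x)>z\}$, and since $F$ is c\`adl\`ag and nondecreasing the infimum is attained and one has the equivalence
\[
  X(z)\le x \quad\Longleftrightarrow\quad z < F(x),
\]
valid for every $x\in\setR$ and $z\in[0,\nu(\setR))$. This is the crucial ``Galois-type'' duality; checking both implications carefully, using right-continuity of $F$ for the nontrivial direction, is where the genuine work lies. From it I would deduce that for fixed $x$ the Lebesgue measure of $\{z\in[0,\nu(\setR)): X(z)\le x\}$ equals $F(x)=\nu((-\infty,x])$.

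With this identity in hand, take $f=\mathbf 1_{(-\infty,x]}$. Then the left-hand side is $\nu((-\infty,x])=F(x)$, while the right-hand side is $\int_0^{\nu(\setR)}\mathbf 1_{(-\infty,x]}(X(z))\dd z = \Lone(\{z:X(z)\le x\}) = F(x)$, so the two agree. By taking differences this gives the formula for indicators of half-open intervals $(a,b]$, hence for simple functions built from such intervals, hence by the usual approximation for all bounded Borel $f$, and finally by truncation and dominated convergence for all $f\in L^1(\setR;\nu)$. The main obstacle is purely the careful verification of the duality $X(z)\le x \Leftrightarrow z<F(x)$ and the resulting ``$X$ pushes $\Lone$ forward to $\nu$'' statement; once that is secured, the extension to general $f$ is routine and I would state it briefly rather than belabor it.
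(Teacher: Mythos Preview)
The paper does not give a proof of this lemma; it is stated as a standard fact. Your pushforward-plus-standard-machine approach is the natural one and is essentially correct, but the pointwise equivalence you assert,
\[
  X(z)\le x \quad\Longleftrightarrow\quad z<F(x),
\]
is false as written. Take $\nu$ to be Lebesgue measure on $[0,1]$, so that $F(x)=x$ and $X(z)=z$ on $[0,1)$; with $z=x=\tfrac12$ one has $X(z)=\tfrac12\le x$ while $z<F(x)$ reads $\tfrac12<\tfrac12$. What does hold is the pair of one-sided implications
\[
  z<F(x)\ \Longrightarrow\ X(z)\le x,
  \qquad
  X(z)\le x\ \Longrightarrow\ z\le F(x),
\]
the first immediately from the definition of the infimum, the second by taking $x_n\downarrow X(z)$ with $F(x_n)>z$ and using right-continuity of $F$. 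Together these give
\[
  [0,F(x))\ \subseteq\ \{z\in[0,\nu(\setR)):X(z)\le x\}\ \subseteq\ [0,F(x)]\cap[0,\nu(\setR)),
\]
and since both ends have Lebesgue measure $F(x)$, you still obtain $\Lone(\{z:X(z)\le x\})=F(x)$, i.e.\ the pushforward of $\Lone|_{[0,\nu(\setR))}$ under $X$ is $\nu$. From there your extension to indicators of intervals, simple functions, and general $f\in L^1(\setR;\nu)$ goes through as you describe. So the plan is sound; only the duality statement needs to be weakened to the two inclusions above.
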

A direct consequence of the substitution formula is the following convenient representation for the convolution of functions with measures.
\begin{lemma}[Representation of the convolution]
  If $\psi:\setR\to\setR$ is a continuous function with at most quadratic growth of $|\psi(x)|$ for $x\to\pm\infty$,
  and if $\nu\in\prb_2(\setR)$ has pseudo-inverse $X_\nu$,
  then 
  \begin{align}
    \label{eq:2}
    \big(\psi\ast\nu\big)(x) := \int_\setR \psi(x-y)\dd\nu(y) = \int_0^{\nu(\setR)} \psi(x-X_\nu(\zeta))\dd\zeta \quad \text{for every $x\in\setR$}.
  \end{align}
\end{lemma}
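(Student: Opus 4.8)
The plan is to deduce the identity directly from the substitution formula of Lemma \ref{lem:16}, applied to the shifted integrand $f(y):=\psi(x-y)$. I would fix $x\in\setR$ throughout and note that $f$ is continuous, so the only hypothesis of Lemma \ref{lem:16} still requiring verification is the integrability $f\in L^1(\setR;\nu)$. Once this is in hand, the formula follows by a single invocation of the lemma with no further work.

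To establish integrability, I would first convert the growth assumption on $\psi$ into a global quadratic bound. By hypothesis there exist constants $A,B\ge0$ and $R>0$ with $|\psi(z)|\le A+B|z|^2$ for $|z|\ge R$; since $\psi$ is continuous it is bounded on the compact interval $[-R,R]$, so after enlarging $A$ we may assume $|\psi(z)|\le A+B|z|^2$ for \emph{all} $z\in\setR$. Substituting $z=x-y$ and using $|x-y|^2\le 2|x|^2+2|y|^2$ then yields a bound of the form $|f(y)|=|\psi(x-y)|\le C_1+C_2|y|^2$, where the constants $C_1,C_2$ depend on the fixed $x$ but not on $y$.

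Integrating this pointwise estimate against $\nu$ gives
\begin{equation*}
  \int_\setR|\psi(x-y)|\dd\nu(y)\le C_1\,\nu(\setR)+C_2\int_\setR|y|^2\dd\nu(y)<\infty,
\end{equation*}
where finiteness of the second moment is precisely the assumption $\nu\in\prb_2(\setR)$. Hence $f\in L^1(\setR;\nu)$, and Lemma \ref{lem:16} applies, returning
\begin{equation*}
  \int_\setR\psi(x-y)\dd\nu(y)=\int_0^{\nu(\setR)}f(X_\nu(z))\dd z=\int_0^{\nu(\setR)}\psi(x-X_\nu(z))\dd z,
\end{equation*}
which is exactly \eqref{eq:2}. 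Since $x\in\setR$ was arbitrary, this proves the claim.

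The argument is elementary, and I do not expect a genuine obstacle: this lemma is essentially a restatement of Lemma \ref{lem:16} for the particular integrand arising in a convolution. The one point deserving care is the integrability check, which genuinely uses both ingredients of the hypothesis — the at most quadratic growth of $\psi$ and the finite second moment of $\nu$ — so that neither assumption is superfluous.
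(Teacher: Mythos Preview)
Your proof is correct and matches the paper's approach exactly: the paper presents this lemma as ``a direct consequence of the substitution formula'' (Lemma \ref{lem:16}) without writing out a proof, and what you have supplied is precisely the routine verification --- checking $y\mapsto\psi(x-y)\in L^1(\setR;\nu)$ via the quadratic growth bound and the finite second moment --- needed to invoke that lemma.
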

\begin{lemma}[Formula for the Wasserstein-distance]
  \label{lem:17} \cite[Section 2.2]{07_Carillo_Toscani_prob-metrics} 
  Let $\mu,\nu\in\meas(\setR)$ with $m:=\mu(\setR)=\nu(\setR)$.
  Then, for \(p \in [1,\infty]\), the \emph{$p$th Wasserstein distance} between $\mu$ and $\nu$ equals
  \begin{equation*}
    W_p(\mu,\nu) = \left\| X_\mu-X_\nu\right\|_{L^p} =
    \begin{cases}
      \left( \int_0^m \lvert X_\mu(z) - X_\nu(z)\rvert^p \, \mathrm{d} z \right)^{1/p} & 1 \leq p < \infty,\\
      \sup_{0<z<m} \left| X_\mu(z) - X_\nu(z) \right| & p = \infty.
    \end{cases}
  \end{equation*}
\end{lemma}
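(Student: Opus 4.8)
The plan is to prove the identity by exhibiting an explicit coupling of $\mu$ and $\nu$ that realizes the right-hand side, and then showing that no coupling can achieve a smaller transport cost. Recall that the $p$th Wasserstein distance is defined by $W_p(\mu,\nu)^p = \inf_\pi \int_{\setR^2} |x-y|^p \dd\pi(x,y)$ for $1\le p<\infty$ (with the $\pi$-essential supremum of $|x-y|$ in place of the integral when $p=\infty$), where the infimum runs over all couplings $\pi$ of $\mu$ and $\nu$, i.e.\ measures on $\setR^2$ whose two marginals are $\mu$ and $\nu$.

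First I would construct the \emph{monotone coupling}. Consider the map $T:[0,m)\to\setR\times\setR$ given by $T(z)=(X_\mu(z),X_\nu(z))$ and set $\pi := T_{\#}\big(\Lone|_{[0,m)}\big)$, the push-forward of the Lebesgue measure restricted to $[0,m)$. The substitution formula (Lemma \ref{lem:16}) shows that $X_\mu$ pushes $\Lone|_{[0,m)}$ onto $\mu$, since $\int_0^m f(X_\mu(z))\dd z=\int_\setR f\dd\mu$ for every $f\in L^1(\setR;\mu)$, and likewise $X_\nu$ pushes it onto $\nu$. Hence both marginals of $\pi$ are as prescribed, so $\pi$ is admissible, and its cost is exactly $\int_0^m |X_\mu(z)-X_\nu(z)|^p\dd z$. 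This yields the inequality $W_p(\mu,\nu)\le \|X_\mu-X_\nu\|_{L^p}$ immediately, together with the analogous bound for $p=\infty$.

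The substance of the argument is the reverse inequality, i.e.\ the \emph{optimality} of the monotone coupling. The underlying reason is that the cost $h(x-y)$ with $h(s)=|s|^p$ is a convex function of $x-y$, which makes the comonotone arrangement optimal in one dimension. Concretely, I would argue via $c$-cyclical monotonicity of the optimal coupling, equivalently via the Hardy--Littlewood rearrangement inequality: if a competitor $\tilde\pi$ had pairs $(x_1,y_1),(x_2,y_2)$ in its support with $x_1<x_2$ but $y_1>y_2$, then convexity of $h$ gives $h(x_1-y_1)+h(x_2-y_2)\ge h(x_1-y_2)+h(x_2-y_1)$, so swapping the two transported masses does not increase the cost. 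Iterating this rearrangement monotonizes any coupling without raising its cost, and the unique monotone coupling with the given marginals is precisely the $\pi$ constructed above. This establishes $W_p(\mu,\nu)\ge\|X_\mu-X_\nu\|_{L^p}$ and closes the proof.

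The main obstacle is the rigorous execution of this optimality step: one must handle couplings that are not concentrated on a graph, because $\mu$ or $\nu$ may carry atoms, whence the CDFs jump and the pseudo-inverses are merely c\`{a}dl\`{a}g with flat pieces. Consequently the swapping argument has to be phrased measure-theoretically rather than for finitely many points, typically by invoking that any optimal plan is supported on a $c$-cyclically monotone set. The degenerate convexity at $p=1$, where $h$ fails to be strictly convex, means optimal couplings need not be unique, but the monotone one still attains the minimum, so the stated value is unaffected. The case $p=\infty$ then follows either by a limiting argument from the finite-$p$ identities or directly by the same rearrangement reasoning applied to the essential supremum.
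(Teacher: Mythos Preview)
The paper does not prove this lemma at all: it is stated with a reference to \cite[Section 2.2]{07_Carillo_Toscani_prob-metrics} and used as a black box. So there is no ``paper's own proof'' to compare against.

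Your sketch is the standard argument and is essentially correct. The monotone coupling $\pi=(X_\mu,X_\nu)_\#(\Lone|_{[0,m)})$ is admissible by the substitution formula, giving the upper bound; optimality follows because $|x-y|^p$ is a convex function of $x-y$, so any optimal plan is supported on a monotone set (equivalently, $c$-cyclically monotone in this one-dimensional setting), and the only such coupling with the given marginals is the one you constructed. Your caveats about atoms, the non-strict convexity at $p=1$, and the $p=\infty$ limit are all on point. The only place to be careful, as you already flag, is making the ``swapping'' rigorous for general measures, which is usually done by invoking the $c$-cyclical monotonicity characterization of optimal plans rather than by iterating pairwise swaps.
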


We refer to \cite{piro13} for a recent notion of generalized Wasserstein-distance between measures of {\it different} mass. Some of the results we obtain below
can be restated in terms of these generalized distances.

\subsection{The transformed equation}
\label{sec:pseudo-inverse-trans}

In order to transform equation \eqref{eq:501} in terms of the pseudo-inverse, we introduce further notations.
Denote by $\mu:[0,\infty)\to\prb_2(\setR)$ one of its solutions and by $\omega$ the given datum, 
as well as by $F(t,\cdot)=F_{\mu(t)}$ and $G=F_\omega$ the respective CDFs and by $X(t,\cdot)=X_{\mu(t)}$ and $Y=X_\omega$ their pseudo-inverses.
Recall that we required $\mu(t)$ to be a probability measure at every $t\ge0$, but we allow $m:=\omega(\setR)>0$ to be an arbitrary positive real, at least for a part of our results,
so $X(t;\cdot)$ and $Y$ are defined on different domains in general.

Let us further assume for now that equality holds in the inequalities \eqref{eq:219}. 
Then we can, at least formally, compute the derivatives of these identities.
From $F(t,X(t,z)) = z$, we get by differentiating with respect to time and space, respectively:
\begin{gather}
  \partial_t F(t, X(t,z)) + \partial_x F (t,X(t,z)) \cdot \partial_t X(t,z) = 0, \label{eq:222}\\
  \partial_x F(t, X(t,z)) \cdot \partial_z X(t,z) = 1. \label{eq:223}
\end{gather}
From \eqref{eq:222}, we obtain 
\begin{equation*}
  \partial_t X = \left( -(\partial_x F)^{-1} \cdot \partial_t F\right) \circ X.
\end{equation*}
Now we can integrate \eqref{eq:501} w.r.t.\ $x\in\setR$ to derive an equation for $\partial_t F$, 
namely
\begin{equation*}
  \partial_tF = (\psi_a' \ast \omega - \psi_r' \ast \mu)\mu,
\end{equation*}
where at the moment we interpret $\mu$ as a density. Using $\partial_x F = \mu$
and combining \eqref{eq:222} and \eqref{eq:223}, we see that
\begin{equation*}
  \partial_t X = - (\psi_a' \ast \omega - \psi_r' \ast \mu) \circ X .
\end{equation*}
In the case where $1<q_a\le2$ and $1<q_r\le2$, 
  the functions $\psi_a'$ and $\psi_b'$ are continuous with sublinear growth, so we can use the representation \eqref{eq:2}. 
This yields the formulation which we want to work with:
\begin{equation}\label{maineqM}
  \partial_t X(t,z) = - \int_0^m \psi_a'(X(t,z) - Y(\zeta)) \, \mathrm{d} \zeta + \int_0^1 \psi_r'(X(t,z) - X(t,\zeta)) \, \mathrm{d} \zeta.
\end{equation}
If instead $q_r=1$, then \(\psi_r' = \operatorname{sgn}\) is not continuous and hence \eqref{eq:2} is not directly applicable.
In that situation, we assume in addition that \(\mu(t)\) is absolutely continuous.
Since for absolutely continuous $\nu\in\meas(\setR)$, one has
\begin{align}
  \int_{\mathbb{R}} \psi'(x - y) \, \mathrm{d} \nu(y) = \int_\mathbb{R} \operatorname{sgn}(x - y) \, \mathrm{d} \nu (y) 
  = \nu((-\infty,x]) - \nu((x,\infty))
  = 2F_\nu(x) - \nu(\mathbb{R})\label{eq:328}
\end{align}
at every $x\in\setR$, it follows that the evolution equation \eqref{maineqM} attains the special form
\begin{align}
  \label{eq:evolq1}
  \partial_t X(t,z) = - \int_0^m \psi_a'(X(t,z) - Y(\zeta)) \, \mathrm{d} \zeta + 2z - 1.
\end{align}
And in the particular case $q_a=q_r=1$, the evolution equation \eqref{eq:501} simplifies to
\begin{equation}
  \label{eq:229}
  \partial_{t}X(t,z) = 2 \left[z - G(X(t,z))\right]+m-1.
\end{equation}

These formal calculations motivate our definition of solutions for \eqref{eq:501} in the next section.

\section{Existence of solutions}
\label{sec:existence-theory-linfty}
Recall our choice \eqref{eq:1} for $\psi_a$, $\psi_r$, and let $\omega\in\meas(\setR)$ be a given profile with total mass $m:=\omega(\setR)>0$.
We further assume that $\omega$ is absolutely continuous with a density function in $L^\infty_c$, so that $Y:=X_\omega\in\cadlag([0,m))$.
\begin{definition}
  \label{dfn:transient}
  A map $\mu:[0,\infty)\to\prb_2(\setR)$ is called a \emph{transient solution} to \eqref{eq:501},
  if it satisfies the following:
  \begin{itemize}
  \item $\mu(t)$ has a density in $L^\infty_c$ at every time $t\ge0$,
  \item the map $t\mapsto X_{\mu(t)}$ is continuously differentiable from $[0,\infty)$ into $\cadlag([0,1))$,
  \item the evolution equation
    \begin{align}
      \label{eq:evol}
      \begin{split}
        \partial_t X_{\mu(t)}(z) = 
        & - 
        \begin{cases}
          \int_0^m\psi_a'\big(X_{\mu(t)}(z)-Y(\zeta)\big)\dd\zeta & \text{if $q_a>1$}, \\
          2G\big(X_{\mu(t)}(z)\big)-m & \text{if $q_a=1$},
        \end{cases}
        \\
        &\quad +
        \begin{cases}
          \int_0^1\psi_r'\big(X_{\mu(t)}(z)-X_{\mu(t)}(\zeta)\big)\dd\zeta & \text{if $q_r>1$}, \\
          2z-1 & \text{if $q_r=1$}
        \end{cases}
      \end{split} 
    \end{align}
    holds at every $t\ge0$ and every $z\in[0,1)$.
  \end{itemize}
\end{definition}
Note that continuous differentiability as a map into $\cadlag([0,1))$ implies that $t\mapsto X_{\mu(t)}(z)$ is a differentiable real function for every $z\in[0,1)$.
\begin{remark}
  In the cases $q_r=1$ or $q_r=2$, the energy functional $\mathcal E$ from \eqref{contenergy} is $\lambda$-convex for some $\lambda\in\setR$, 
  and therefore it possesses a unique $\lambda$-contractive gradient flow in the Wasserstein $W_2$ metric ---
  we refer to \cite{AGS08} for the case $q_r=2$, and to \cite{BS,BCDP} for the case $q_r=1$.
  It is easily seen that these gradient flow solutions are transient solutions in the sense of Definition \ref{dfn:transient} above, and vice versa.
\end{remark}
\begin{theorem}[Existence of solutions]
  \label{thm:existence}
  Let an initial condition $\mu_0\in\prb(\setR)$ with density in $L^\infty_c(\mathbb{R})$ be given.
  Then, there is a unique transient solution $\mu:[0,\infty)\to\prb_2(\setR)$ in the sense of Definition \ref{dfn:transient} with $\mu(0)=\mu_0$.
  In particular, $\mu$ is a distributional solution of the original equation \eqref{eq:501}, i.e., 
  for all $\varphi \in C^{\infty}_{c}([0,\infty) \times \mathbb{R})$, it satisfies
  \begin{align}
    \label{eq:weak} 
    \begin{split}
      &-\int_{0}^{\infty} \int_{\mathbb{R}} \partial_{t}\varphi(t,x) \, \mathrm{d} \mu(t,x) \, \mathrm{d} t - \int_{\mathbb{R}} \varphi(0,x) \, \mathrm{d} \mu_0(x)\\
      ={}&\int_{0}^{\infty} \int_{\mathbb{R}} \partial_{x}\varphi(t,x) \cdot \left(\psi_r' \ast \mu(t,.)\right)(x) \, \mathrm{d} \mu(t,x) \, \mathrm{d} t \\
      &- \int_{0}^{\infty} \int_{\mathbb{R}} \partial_{x} \varphi(t,x) \cdot \left(\psi_a' \ast \omega\right)(x)\, \mathrm{d} \mu(t,x) \, \mathrm{d} t. 
    \end{split}
  \end{align}
\end{theorem}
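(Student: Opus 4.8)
The plan is to read the evolution law \eqref{eq:evol} as an autonomous ODE $\partial_t X=\mathcal F[X]$ in the Banach space $\cadlag([0,1))$, where
\[
\mathcal F[X](z)=-\mathcal A[X](z)+\mathcal R[X](z),
\]
with attraction part $\mathcal A[X](z)=(\psi_a'\ast\omega)(X(z))$ (resp.\ $2G(X(z))-m$ if $q_a=1$) and repulsion part $\mathcal R[X](z)=\int_0^1\psi_r'(X(z)-X(\zeta))\dd\zeta$ (resp.\ $2z-1$ if $q_r=1$). First I would note that $\mathcal A$ is globally Lipschitz on bounded sets: for $q_a>1$ the convolution smooths the H\"older kernel, since $(\psi_a'\ast\omega)'=\psi_a''\ast\omega$ with $\psi_a''(x)=q_a(q_a-1)|x|^{q_a-2}\in L^1_{loc}$ convolved against $\omega\in L^\infty_c$ is bounded, while for $q_a=1$ the map $X\mapsto G(X)$ is Lipschitz because $\omega\in L^\infty$. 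The genuine difficulty is $\mathcal R$ in the range $1<q_r<2$, where $\psi_r'(s)=q_r|s|^{q_r-1}\sgn(s)$ is only $(q_r-1)$-H\"older at $s=0$; a direct estimate yields merely $\|\mathcal R[X_1]-\mathcal R[X_2]\|_\infty\le C\|X_1-X_2\|_\infty^{q_r-1}$, which is too weak for a Picard contraction or an Osgood-type uniqueness argument. The whole proof is arranged to circumvent this; the cases $q_r\in\{1,2\}$ are elementary since there $\mathcal R$ is affine in $X$.

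For existence I would replace $\psi_r$ by a regularised kernel $\psi_r^\varepsilon\in C^{1,1}$ with $(\psi_r^\varepsilon)'$ globally Lipschitz and $\psi_r^\varepsilon\to\psi_r$ locally uniformly. Then the corresponding $\mathcal F^\varepsilon$ is Lipschitz on bounded subsets of $\cadlag([0,1))$, so Picard--Lindel\"of produces a unique $X^\varepsilon\in C^1([0,\infty);\cadlag([0,1)))$ with $X^\varepsilon(0,\cdot)=X_{\mu_0}$. The crucial step is a uniform lower bound on the increments of $X^\varepsilon$ in $z$, equivalently an upper bound on the density. Working with $X^\varepsilon(t,z)-X^\varepsilon(t,\zeta)$ (so as to avoid differentiating in $z$), or formally differentiating, one obtains the linear relation
\[
\partial_t(\partial_z X^\varepsilon)(z)=\Big[-(\psi_a''\ast\omega)(X^\varepsilon(z))+\int_0^1(\psi_r^\varepsilon)''(X^\varepsilon(z)-X^\varepsilon(\zeta))\dd\zeta\Big]\,\partial_z X^\varepsilon(z).
\]
Since $(\psi_r^\varepsilon)''\ge0$ and the attractive coefficient is bounded ($\|\psi_a''\ast\omega\|_\infty<\infty$, and $2\|\omega\|_\infty$ when $q_a=1$), dropping the favourable repulsion term gives $\partial_z X^\varepsilon(t,z)\ge\partial_z X^\varepsilon(0,z)\,e^{-Ct}$ uniformly in $\varepsilon$. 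In particular monotonicity is preserved, each $X^\varepsilon(t,\cdot)$ is a genuine pseudo-inverse, and $X^\varepsilon(t,\cdot)$ is uniformly \emph{bi-Lipschitz} in $z$: there is $c_T>0$ with $X^\varepsilon(t,z)-X^\varepsilon(t,\zeta)\ge c_T|z-\zeta|$ for $z\ge\zeta$.

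Completing existence then requires only soft compactness. A companion bound $|\partial_t X^\varepsilon|\le C_T$, from the at most linear growth of $\psi_a'\ast\omega$ ($q_a\le2$) and $|\mathcal R^\varepsilon[X^\varepsilon]|\le q_r(\diam\supp\mu^\varepsilon)^{q_r-1}$, keeps supports bounded and gives equicontinuity in time. Helly's selection theorem in $z$ (using monotonicity and the uniform support bound) together with Arzel\`a--Ascoli in $t$ extract a subsequence $X^\varepsilon\to X$; since $\psi_r^\varepsilon\to\psi_r$ uniformly on the bounded range of arguments, $\mathcal R^\varepsilon[X^\varepsilon]\to\mathcal R[X]$ and $X$ is a transient solution in the sense of Definition \ref{dfn:transient}, inheriting the bi-Lipschitz bound above.

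Uniqueness is where that bi-Lipschitz bound pays off. Given two solutions $X_1,X_2$ with the same datum, set $D=X_1-X_2$ and differentiate $\tfrac12\|D(t)\|_{L^2}^2=\tfrac12 W_2(\mu_1,\mu_2)^2$ (Lemma \ref{lem:17}). The attraction part contributes at most $L_a\|D\|_{L^2}^2$. For the repulsion part I would symmetrise in $z,\zeta$ and use that $\psi_r'$ is odd and increasing to rewrite its contribution as $\tfrac12\int_0^1\int_0^1(u-v)\big(\psi_r'(u)-\psi_r'(v)\big)\dd\zeta\dd z$, where $u=X_1(z)-X_1(\zeta)$, $v=X_2(z)-X_2(\zeta)$ and $u-v=D(z)-D(\zeta)$. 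By monotonicity $u,v$ share the sign of $z-\zeta$, so the mean value theorem gives $(u-v)(\psi_r'(u)-\psi_r'(v))=\psi_r''(\xi)\,(D(z)-D(\zeta))^2$ with $\xi\ge\min(u,v)\ge c_T|z-\zeta|$; since $\psi_r''(\xi)=q_r(q_r-1)\xi^{q_r-2}$ is decreasing, this is at most $q_r(q_r-1)c_T^{q_r-2}|z-\zeta|^{q_r-2}(D(z)-D(\zeta))^2$. As $q_r-2>-1$ the weight $|z-\zeta|^{q_r-2}$ is integrable, whence $\int_0^1\int_0^1|z-\zeta|^{q_r-2}(D(z)-D(\zeta))^2\dd\zeta\dd z\le C\|D\|_{L^2}^2$. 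Therefore $\tfrac{d}{dt}\|D\|_{L^2}^2\le C\|D\|_{L^2}^2$, and Gronwall with $D(0)=0$ forces $X_1\equiv X_2$. Finally, the distributional identity \eqref{eq:weak} is recovered from the $\cadlag$ solution by the substitution formula (Lemma \ref{lem:16}) and the convolution representation \eqref{eq:2}, after testing \eqref{eq:evol} against $\varphi\in C_c^\infty$. The main obstacle throughout is precisely the non-Lipschitz repulsion for $1<q_r<2$, and every step is designed so that the uniform lower bound on $\partial_z X$ converts the singular factor $|X(z)-X(\zeta)|^{q_r-2}$ into the integrable kernel $|z-\zeta|^{q_r-2}$.
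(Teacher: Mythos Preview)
Your approach is correct but takes a genuinely different route from the paper's. The paper obtains existence and uniqueness in a single stroke via the Banach fixed point theorem: it defines a time-weighted integral operator $S$ on the closed set $\mathcal{B}\subset C([0,T];\cadlag([0,1)))$ of curves satisfying the slope condition $X(t,z+h)-X(t,z)\ge\alpha e^{-\lambda t}h$ (exactly your bi-Lipschitz bound), shows that $S$ preserves $\mathcal{B}$ using monotonicity of $\psi_r'$ and of $U_\lambda:=\psi_a'\ast\omega-\lambda\,\mathrm{id}$, and proves contractivity in the weighted sup-norm by precisely the mechanism you isolate---the slope condition converts the singular factor $|X(z)-X(\zeta)|^{q_r-2}$ into the integrable kernel $|z-\zeta|^{q_r-2}$. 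You instead separate the two tasks: regularisation plus Helly/Arzel\`a--Ascoli compactness for existence, then an $L^2$ (i.e.\ $W_2$) Gronwall estimate for uniqueness. Both proofs rest on the identical structural observation; the paper's route is more economical (no approximation layer, and the $C^1$-in-time regularity falls out of the fixed-point equation \eqref{eq:271} directly), whereas yours is more modular and closer to standard PDE practice. One point you should make explicit in your uniqueness step: the bi-Lipschitz lower bound must hold for \emph{every} transient solution, not only the one you constructed; this follows from applying the same monotonicity argument (repulsion contributes non-negatively to $\partial_t(X(t,z)-X(t,\zeta))$, attraction is Lipschitz) to difference quotients of any solution of \eqref{eq:evol}, so your Gronwall estimate indeed compares an arbitrary pair of transient solutions.
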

We stress that Theorem \ref{thm:existence} is not the main result of the paper.
In fact, it can be obtained by combining results from the recent literature:
the case $1<q_r\le2$ is covered by \cite[Theorem 7]{BCLR}, and $q_r=1$ is contained in \cite[Theorem 4.3.1]{BS}.
However, we provide a simple and short unifying proof of this result (see Appendix \ref{sec:proof-theor-refthm:l})
which is based on the use of the inverse distribution function.

\section{Asymptotic behavior}
\label{sec:asymptotics}
This section contains our main contribution: an analysis of the long time asymptotics in various regimes of $q_a$ and $q_r$.
First, we give a definition of \emph{steady states} in the spirit of Section \ref{sec:existence-theory-linfty}.
\begin{definition}
  \label{dfn:steady}
  A probability measure $\mu^\star \in\prb(\setR)$ with density in $L^\infty_c(\setR)$ is called a \emph{steady state} for the equation \eqref{eq:evol} and exponents $1\leq q_r, q_a \leq 2$,
  if its inverse distribution function $X^\star=X_{\mu^\star}$ satisfies 
 \begin{equation}
      \label{eq:evoloo}
        \begin{cases}
          \int_0^m\psi_a'\big(X^\star(z)-Y(\zeta)\big)\dd\zeta & \text{if $q_a>1$}, \\
          2G\big(X^\star(z)\big)-m & \text{if $q_a=1$},
        \end{cases}
        =
        \begin{cases}
          \int_0^1\psi_r'\big(X^\star(z)-X^\star(\zeta)\big)\dd\zeta & \text{if $q_r>1$}, \\
          2z-1 & \text{if $q_r=1$}
        \end{cases}
    \end{equation}
    at every  $z\in[0,1)$.
\end{definition}
We shall now study various regimes of the exponents $1\leq q_r\le q_a \leq 2$ in detail.

\subsection{The case $q_r = q_a = 2$}
\label{sec:case-q-2}
The most special case that we consider is $q_a = q_r = 2$. 
%
As $\psi_r'(x) = 2x$, equation \eqref{eq:evol} simplifies significantly:
\begin{align}
  \label{eq:233}
  \begin{split}
    \partial_t X(t,z) &= - \int_0^m 2(X(t,z) - Y(\zeta)) \, \mathrm{d} \zeta + \int_0^1 2(X(t,z) - X(t,\zeta)) \, \mathrm{d} \zeta \\
    &= -2(m-1)X(t,z) + 2\int_0^m Y(\zeta) \, \mathrm{d} \zeta - 2\int_0^1 X(t,\zeta)\, \mathrm{d} \zeta .
  \end{split}
\end{align}
This integro-differential equation can be easily solved as follows.
First, integrate \eqref{eq:233} \wrt $z\in(0,1)$ on both sides to obtain
\begin{align*}
  \frac{\mathrm{d}}{\mathrm{d} t} \bigg(\int_0^1 X(t,z)\dd z - \frac1m \int_0^m Y(z)\dd z\bigg)
  = - 2m \bigg(\int_0^1 X(t,\zeta)\dd\zeta - \frac1m \int_0^m Y(\zeta)\dd\zeta\bigg).
\end{align*}
Therefore,
\begin{align}
  \label{eq:666}
  \int_0^1 X(t,\zeta)\dd\zeta - \frac1m \int_0^m Y(\zeta)\dd\zeta = e^{-2mt}\bigg(\int_0^1 X(0,\zeta)\dd\zeta - \frac1m \int_0^m Y(\zeta)\dd\zeta \bigg).
\end{align}
Since $\int_0^1X(t,z)\dd z$ and $\frac1m\int_0^mY(z)\dd z$ equal to the center of mass of $\mu(t)$ and of $\omega$, respectively,
it follows that the center of mass of $\mu(t)$ converges to that of $\omega$ with the exponential rate $\exp(-2mt)$.
Now insert \eqref{eq:666} into \eqref{eq:233} and solve for $X(t,z)$:
\begin{align*}
  X(t,z) = e^{-2(m-1)t}\bigg(X(0,z) - \big(1-e^{-2t}\big) \int_0^1X(0,\zeta) \dd\zeta \bigg)  + \frac{1-e^{-2mt}}{m}\int_0^mY(\zeta)\dd\zeta .
\end{align*}
It is easily seen that $X(t,z)$ converges to the center of mass of $\omega$ at exponential rate $\exp(-2(m-1)t)$ for $m>1$, 
while for $0<m<1$, the value of $X(t,z)$ diverges to $+\infty$ or to $-\infty$ if $X(0,z)$ lies to the right or to the left of $\mu(0)$'s center of mass, respectively.
In the borderline case $m=1$, the functions $X(t,\cdot)$ at different times $t\ge0$ only differ by a constant.
In conclusion, we obtain the following.
\begin{theorem}
  \label{thm:q2}
  Assume $q_a=q_r=2$, and let $\mu:[0,\infty)\to\prb(\setR)$ be a transient solution.
  \begin{itemize}
  \item If $m>1$, then $\mu(t)$ converges weakly to a Dirac measure concentrated at $\omega$\!'s center of mass.
  \item If $0<m<1$, then $\mu(t)$ converges vaguely to zero.
  \item If $m=1$, then every $\mu(t)$ is a translate of $\mu_0$, and the center of mass of $\mu(t)$ converges to that of $\omega$ with exponential rate $\exp(-2mt)$.
  \end{itemize}
\end{theorem}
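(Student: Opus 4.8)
The plan is to leverage the explicit formula for $X(t,z)$ derived just above the statement, and to convert the resulting pointwise information about the pseudo-inverses into convergence of the measures $\mu(t)$ via the substitution formula (Lemma~\ref{lem:16}) and the Wasserstein representation (Lemma~\ref{lem:17}). Throughout I abbreviate the centers of mass $c_\omega := \frac1m\int_0^m Y(\zeta)\dd\zeta$ and $c_0 := \int_0^1 X(0,\zeta)\dd\zeta$, and I note that since $\mu_0$ has compact support the function $X(0,\cdot)$ is bounded, say $X(0,z)\in[A,B]$ for all $z\in[0,1)$. Rearranging the explicit formula gives the convenient identity
\begin{equation*}
  X(t,z) - c_\omega = e^{-2(m-1)t}\big(X(0,z)-c_0\big) + e^{-2mt}\big(c_0-c_\omega\big),
\end{equation*}
which I would analyze separately in the three mass regimes.

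First, suppose $m>1$. Then both exponentials above tend to $0$, and since $X(0,z)-c_0$ is bounded uniformly in $z$, the right-hand side tends to $0$ uniformly in $z\in[0,1)$; that is, $X(t,\cdot)\to c_\omega$ in $\cadlag([0,1))$. As the constant function $c_\omega$ is the pseudo-inverse of the Dirac mass $\delta_{c_\omega}$, for any bounded continuous $\varphi$ the substitution formula yields $\int_\setR\varphi\dd\mu(t) = \int_0^1\varphi(X(t,z))\dd z$, and dominated convergence (the integrand is bounded by $\|\varphi\|_\infty$ and converges pointwise to $\varphi(c_\omega)$ by continuity) gives $\int_\setR\varphi\dd\mu(t)\to\varphi(c_\omega)$. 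This is exactly weak convergence $\mu(t)\to\delta_{c_\omega}$. (Alternatively, uniform convergence of $X(t,\cdot)$ shows $W_p(\mu(t),\delta_{c_\omega})\to0$ directly through Lemma~\ref{lem:17}.)

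Next, suppose $0<m<1$, so that $e^{-2(m-1)t}=e^{2(1-m)t}\to+\infty$ while $e^{-2mt}\to0$. Hence for every $z$ with $X(0,z)\neq c_0$ we have $X(t,z)\to+\infty$ if $X(0,z)>c_0$ and $X(t,z)\to-\infty$ if $X(0,z)<c_0$. Because $\mu_0$ is absolutely continuous, its CDF $F_{\mu_0}$ is continuous, so $X(0,\cdot)$ has no flat parts and the level set $\{z:X(0,z)=c_0\}$ consists of at most one point, hence is Lebesgue-null; thus $|X(t,z)|\to\infty$ for a.e.\ $z$. For $\varphi\in C^0_c(\setR)$ the integrand $\varphi(X(t,z))$ is bounded by $\|\varphi\|_\infty$ and, because $\varphi$ has compact support, converges to $0$ for a.e.\ $z$; dominated convergence then gives $\int_\setR\varphi\dd\mu(t)=\int_0^1\varphi(X(t,z))\dd z\to0$, i.e.\ $\mu(t)\to0$ vaguely. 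I would emphasize that the convergence is only vague, not narrow: each $\mu(t)$ has mass one, so mass escapes to infinity and cannot be recovered by testing against $\varphi\equiv1$.

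Finally, in the borderline case $m=1$ the factor $e^{-2(m-1)t}$ equals $1$, and the identity above reduces to $X(t,z) = X(0,z) + (1-e^{-2t})(c_\omega-c_0)$, a shift of $X(0,\cdot)$ by a constant independent of $z$. Since adding a $z$-independent constant to the pseudo-inverse corresponds precisely to translating the measure, each $\mu(t)$ is a translate of $\mu_0$, and the stated exponential rate for the center of mass is read off directly from \eqref{eq:666} with $m=1$, giving $\int_0^1 X(t,\zeta)\dd\zeta - c_\omega = e^{-2t}(c_0-c_\omega)$. I expect no serious obstacle here, since the explicit solution does all the analytic work; the only point requiring genuine care is the measure-zero claim for the level set $\{X(0,\cdot)=c_0\}$ in the case $m<1$, which rests entirely on the absence of atoms in $\mu_0$, while everything else is bookkeeping on the explicit formula.
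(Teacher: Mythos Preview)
Your proposal is correct and follows exactly the paper's approach: the paper derives the explicit formula for $X(t,z)$ in the discussion preceding the theorem and then simply states ``In conclusion, we obtain the following,'' leaving the passage from pointwise behavior of $X(t,\cdot)$ to convergence of $\mu(t)$ implicit. You have supplied precisely those missing details---the substitution formula plus dominated convergence for the weak/vague limits, and the observation that absolute continuity of $\mu_0$ forces $\{X(0,\cdot)=c_0\}$ to be at most a singleton---so your argument is in fact more complete than what appears in the paper.
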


\subsection{The case $q_r = 1$}
\label{sec:case-q-1}
Next, we consider the case \( q_r = 1 \).
Then \( \psi_r \) is a multiple of the Newtonian potential in 1D, \ie $\psi_r''=2\delta_0$.
Definition \ref{dfn:steady} of stationary solution $\mu^\star$ specializes to the following equation for
its inverse distribution function $X^\star$:
\begin{equation}
  \label{eq:360}
  2z - 1 =
  \begin{cases}
    \int_0^m \psi_a'(X^\star(z) - Y(\zeta)) \, \mathrm{d} \zeta & \text{if $q_a>1$}, \\
    2G(X^\star(z))-m & \text{if $q_a=1$}.
  \end{cases}
\end{equation}
In Proposition \ref{prp:steady-stat-equat} below, 
we identify the unique steady state $\mu^\star$ of \eqref{eq:501} as 
a suitable cut-off of \( \frac{1}{2} \psi_a'' \ast \omega \) --- note that $ \frac{1}{2} \psi_a''\ast\omega=\omega$ if $q_a=1$. 
Theorem \ref{thm:q1} provides convergence of transient solutions $\mu(t)$ towards $\mu^\star$ as \( t\rightarrow\infty \).
\begin{proposition}[Existence and uniqueness of steady states]
  \label{prp:steady-stat-equat}
  Assume that either $q_a>1$, or that $q_a=1$ and $m\ge1$.
  Then, there are (unique, if $q_a>1$) real numbers $\underline x<x_0<\overline x$ such that
  \begin{align*}
    \psi'_a\ast\omega(x_0)=0, \quad \psi'_a\ast\omega(\underline x)=-1,\quad \psi'_a\ast\omega(\overline x)=1.
  \end{align*}
  And there is a unique steady state in the sense of Definition \ref{dfn:steady},
  which is given by
  \begin{align*}
    \mu^\star = \frac12\big(\psi_a''\ast\omega\big)\, \mathbf{1}_{[\underline x,\overline x]}.
  \end{align*}
  If instead $q_a=1$ and $m<1$, then there is no steady state --- but see Theorem \ref{thm:q1bis}.
\end{proposition}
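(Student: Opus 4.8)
The plan is to reduce the steady-state equation \eqref{eq:360} to a single pointwise identity for a monotone profile, and then to read off the steady state directly from that profile. Writing $\Phi:=\psi_a'\ast\omega$ and using the convolution representation \eqref{eq:2} (together with \eqref{eq:328} when $q_a=1$), the right-hand side of \eqref{eq:360} becomes $\Phi(X^\star(z))$, so that the defining equation for any steady state reads
\begin{equation*}
\Phi\big(X^\star(z)\big)=2z-1,\qquad z\in[0,1).
\end{equation*}
Everything then hinges on the shape of $\Phi$.

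First I would establish the analytic properties of $\Phi$. For $q_a>1$ the derivative $\psi_a'$ is continuous and strictly increasing with $\psi_a'(x)\to\pm\infty$ as $x\to\pm\infty$; convolving with the positive, compactly supported measure $\omega$ of mass $m$ preserves these features, so $\Phi$ is continuous, strictly increasing and surjective onto $\setR$. Hence $\Phi$ is a bijection of $\setR$, which yields \emph{unique} points $\underline x<x_0<\overline x$ with $\Phi(\underline x)=-1$, $\Phi(x_0)=0$, $\Phi(\overline x)=1$, and forces $X^\star=\Phi^{-1}(2z-1)$. For $q_a=1$ one has $\Phi=2G-m$ by \eqref{eq:328}, a continuous nondecreasing function with range $[-m,m]$; the intermediate value theorem still produces points $\underline x,x_0,\overline x$ (no longer unique, since $\Phi$ may be flat where $\omega$ vanishes) as soon as $[-1,1)\subseteq[-m,m]$, i.e. precisely when $m\ge1$.

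Next I would verify that the claimed $\mu^\star=\tfrac12(\psi_a''\ast\omega)\mathbf 1_{[\underline x,\overline x]}$ is exactly the measure whose pseudo-inverse solves the reduced equation. Since $\psi_a''\ge0$ and $\omega\ge0$, the density is nonnegative; its boundedness for $1<q_a<2$ follows from the local integrability of the kernel $|z|^{q_a-2}$ (as $q_a-2>-1$) convolved against $\omega\in L^\infty_c$, and for $q_a=1$ the density is just $\omega\mathbf 1_{[\underline x,\overline x]}$. Because $\Phi'=\psi_a''\ast\omega$ and $\Phi$ is locally Lipschitz, the fundamental theorem of calculus gives, for $x\in[\underline x,\overline x]$,
\begin{equation*}
F_{\mu^\star}(x)=\tfrac12\int_{\underline x}^{x}\Phi'(y)\dd y=\tfrac12\big(\Phi(x)-\Phi(\underline x)\big)=\tfrac12\big(\Phi(x)+1\big),
\end{equation*}
so $F_{\mu^\star}(\overline x)=1$ (confirming unit mass) and $F_{\mu^\star}(\underline x)=0$. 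As $F_{\mu^\star}$ is continuous, $F_{\mu^\star}\circ X_{\mu^\star}=\mathrm{id}$, and substituting into the displayed formula gives $\Phi(X_{\mu^\star}(z))=2z-1$; thus $\mu^\star$ is a steady state. Uniqueness is immediate for $q_a>1$ from the invertibility of $\Phi$; for $q_a=1$ the endpoints are pinned only through $G(\underline x)=(m-1)/2$ and $G(\overline x)=(m+1)/2$, hence determined up to intervals on which $\omega=0$, so the cut-off $\omega\mathbf 1_{[\underline x,\overline x]}$ — and hence $\mu^\star$ — is the same measure for every admissible choice.

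Finally, the nonexistence claim for $q_a=1$, $m<1$ drops out of the range analysis: since $\Phi=2G-m\le m<1$, the equation $\Phi(X^\star(z))=2z-1$ cannot be solved as $z\uparrow1$, where the right-hand side tends to $1$. I expect the main obstacle to be the verification step — in particular, justifying $\Phi'=\psi_a''\ast\omega$ together with the fundamental-theorem computation and the $L^\infty$ bound on the density when the kernel $|z|^{q_a-2}$ is singular at the origin; the reduction and the monotonicity/range dichotomy are comparatively routine.
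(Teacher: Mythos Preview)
Your proposal is correct and follows essentially the same route as the paper: both introduce $U=\Phi=\psi_a'\ast\omega$, reduce \eqref{eq:360} to $\Phi(X^\star(z))=2z-1$, exploit strict monotonicity and surjectivity of $\Phi$ for $q_a>1$ (and the range $[-m,m]$ of $2G-m$ for $q_a=1$) to obtain $\underline x,x_0,\overline x$ and $X^\star=\Phi^{-1}(2z-1)$, and then read off the CDF $F^\star(x)=\tfrac12(\Phi(x)+1)$ on $[\underline x,\overline x]$ to identify the density as $\tfrac12\psi_a''\ast\omega$. The only cosmetic difference is in the uniqueness argument for $q_a=1$: the paper singles out $X^\star(z)=Y(z+(m-1)/2)$ as the unique right-continuous nondecreasing selection, whereas you argue that the cut-off measure is independent of the admissible choice of endpoints---both amount to the same thing.
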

\begin{figure}[t]
  \centering
  \subfigure[\( \psi_a' \ast \omega \) determines \( x_0,a=\underline x,b=\overline x \)]{\includegraphics[]{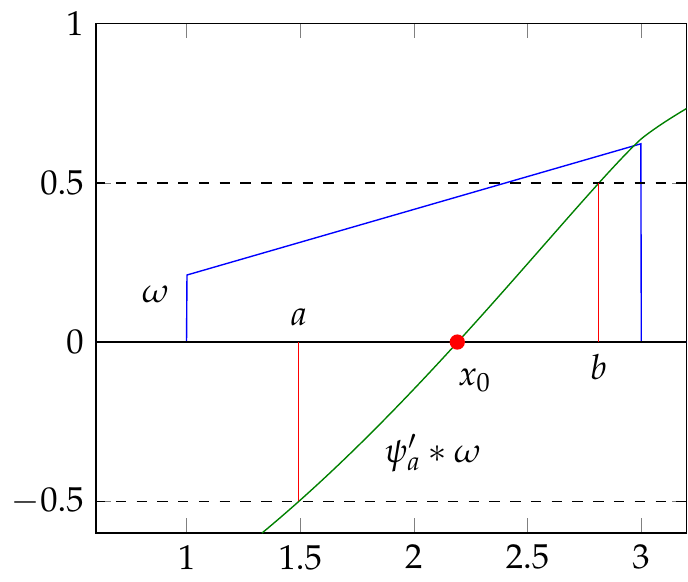}}
  \hspace{.5cm}
  \subfigure[\( \psi_a'' \ast \omega \) determines the steady state profile]{\includegraphics[]{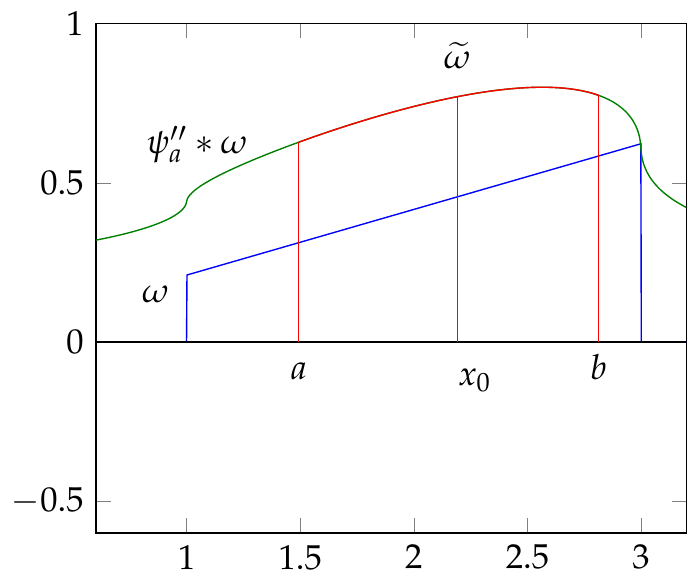}}
  \caption{Example of the construction of the steady state \( \widetilde{\omega} = \mu^\star\) for \( q_a = 1.5 \)}
  \label{fig:q1plot}
\end{figure}
See Figure \ref{fig:q1plot} for an example of the resulting \( \mu^\star \), using the notation of Proposition \ref{prp:steady-stat-equat}.
\begin{proof}
  We subdivide the proof in two cases depending on $q_a$.
  
  \emph{First, assume $q_a>1$.}
  Let $U:=\psi_a'\ast\omega$.
  Since $\psi_a'$ is strictly monotonically increasing, continuous, and tends to \( \pm \infty \) as \( x \rightarrow \pm \infty \), the same is true for $U$. 
  Hence $x_0,\underline x,\overline x$ are well-defined.
  Formula \eqref{eq:2} is applicable, and the steady state equation \eqref{eq:360} is thus equivalent to
  \begin{equation*}
    2z - 1 = U(X^\star(z)), \quad 0\le z<1.
  \end{equation*}
  The function $U$ is invertible, and so the unique solution $X^\star:[0,1]\to\setR$ is given by
  \begin{equation*}
    X^\star(z) = U^{-1}(2z - 1).
  \end{equation*}
  By the properties of $U$, the function $X^\star$ so defined is strictly increasing and bounded, 
  hence it is the inverse distribution function of some absolutely continuous measure $\mu^\star\in\prb(\setR)$ with density in $L^\infty_c$.
  The associated CDF \( F^\star=F_{\mu^\star} \) is given by
  \begin{equation*}
    F^\star(x) = 
    \begin{cases}
      0 & \text{if $U(x) < -1$},\\
      \frac{1}{2} \left( U(x) + 1 \right) & \text{if $U(x) \in [-1,1]$},\\
      1 & \text{if $U(x) > 1$}.
    \end{cases}
  \end{equation*}
  In conlusion, the steady state $\mu^\star$ has its median at the unique zero of $U$, which is $x_0$. 
  And its density coincides with \( \frac12U' = \frac12 \psi''_a\ast\omega \), extending from $x_0$ in both directions until mass $1/2$ is reached on each side.
  This proves the claim for $q_a>1$.

  \emph{Now assume $q_a=1$.} 
  The right-hand side of \eqref{eq:360} varies in $[-m,m]$.
  If $m<1$, then clearly there are values $z\in[0,1)$ for which \eqref{eq:360} cannot be true.
  Hence, in that case there is no steady state in the sense of Definition \ref{dfn:steady}.
  
  Let $m\ge1$ from now on.
  Since $\omega\in L^\infty_c$, its CDF $G$ is continuous and monotonically increasing, and $G(x)=m$, $G(-x)=0$ for all sufficiently large $x>0$.
  Hence for every $z\in[0,1)$, there is at least one $X^\star(z)\in\setR$ such that \eqref{eq:360} holds.
  Note that $G$ is constant on intervals where $\omega$ vanishes, so $X^\star(z)$ is not uniquely determined in general.
  However, it is easily seen that among all the possible solutions $X^\star:[0,1)\to\setR$, there is precisely one which is monotonically increasing and right continuous,
  namely $z\mapsto Y(z+(m-1)/2)$.
  There is an associated $\mu^\star\in\prb(\setR)$ with density in $L^\infty_c$ and corresponding CDF $F^\star:\setR\to[0,1]$ given by
  \begin{equation*}
    F^\star(x) = 
    \begin{cases}
      0 & \text{if $G(x) < (m - 1)/2$},\\
      G(x) - \frac{m - 1}{2} & \text{if $G(x) \in [(m-1)/2, \, 1 + (m-1)/2]$},\\
      1 & \text{if $G(x) > 1 + (m-1)/2$}.
    \end{cases}
  \end{equation*}
  Similarly as before, we conclude that $\mu^\star$ has the same median as $\omega$,
  and that the density of $\mu^\star$ coincides with $G'=\omega$ on an interval $[\underline x,\overline x]$ chosen such that $\mu^\star$ has unit mass.
\end{proof}
\begin{theorem}[Asymptotic stability]
  \label{thm:q1}
  Assume that either $q_a>q_r=1$, or that $q_a=q_r=1$ and $m\ge1$. 
  Let $\mu^\star$ be the unique stationary state determined in Proposition \ref{prp:steady-stat-equat} above,
  and let $\mu:[0,\infty)\to\prb(\setR)$ be a transient solution in the sense of Definition \ref{dfn:transient}.
  Then $W_2(\mu(t), \mu^\star)$ converges to zero monotonically as $t\to\infty$.
\end{theorem}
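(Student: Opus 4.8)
The plan is to pass to the inverse distribution function and to exploit a structural simplification that is special to $q_r=1$: the transformed evolution equation \emph{decouples} into a one-parameter family of autonomous scalar ODEs. By Lemma \ref{lem:17} we have
\[
  W_2(\mu(t),\mu^\star)^2 = \int_0^1 \big|X(t,z)-X^\star(z)\big|^2\dd z,
\]
where $X(t,\cdot)=X_{\mu(t)}$ and $X^\star=X_{\mu^\star}$. Inspecting \eqref{eq:evolq1} and \eqref{eq:229}, the repulsion contributes only the explicit term $2z-1$, while the attraction term is $\int_0^m\psi_a'(X(t,z)-Y(\zeta))\dd\zeta=(\psi_a'\ast\omega)(X(t,z))$ when $q_a>1$ (using \eqref{eq:2}) and $2G(X(t,z))-m$ when $q_a=1$. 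In either case the right-hand side depends on the unknown solely through its value at the \emph{same} $z$. Thus, for each fixed $z\in[0,1)$, the map $t\mapsto X(t,z)$ solves $\dot x = -\Phi_z(x)$, with $\Phi_z(x)=(\psi_a'\ast\omega)(x)-(2z-1)$ if $q_a>1$ and $\Phi_z(x)=2G(x)-(2z+m-1)$ if $q_a=1$; moreover $X^\star(z)$ is a zero of $\Phi_z$ by Definition \ref{dfn:steady} and Proposition \ref{prp:steady-stat-equat}.

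The next step is to analyse each scalar ODE. In both cases $\Phi_z$ is non-decreasing (strictly, when $q_a>1$), so $-\Phi_z$ is a dissipative vector field. Testing the equation against $x-X^\star(z)$ gives
\[
  \frac{\df}{\df t}\,\tfrac12\big|X(t,z)-X^\star(z)\big|^2
  = -\big(X(t,z)-X^\star(z)\big)\big(\Phi_z(X(t,z))-\Phi_z(X^\star(z))\big)\le 0,
\]
so that $t\mapsto|X(t,z)-X^\star(z)|$ is non-increasing. Since a solution of a scalar ODE cannot cross the equilibrium barrier $X^\star(z)$, it stays on one side of it; whenever $X^\star(z)$ is the \emph{unique} zero of $\Phi_z$, a standard lower-bound argument (if the distance stayed $\ge\sqrt{\ell}>0$, then $|\dot x|\ge\Phi_z(X^\star(z)\pm\sqrt\ell)>0$, forcing divergence) yields $X(t,z)\to X^\star(z)$. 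For $q_a>1$ the strict monotonicity of $\psi_a'\ast\omega$ makes the zero unique for \emph{every} $z$, and we obtain monotone convergence $X(t,z)\to X^\star(z)=(\psi_a'\ast\omega)^{-1}(2z-1)$ pointwise in $z$.

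The degenerate case $q_a=q_r=1$ requires more care, and I expect it to be the main obstacle. There $\Phi_z$ is only non-decreasing, and its zero set is the level set $\{x:G(x)=z+(m-1)/2\}$, which fails to be a singleton exactly when $z+(m-1)/2$ is a value of $G$ attained on a nondegenerate plateau, i.e.\ on a gap of $\supp\omega$. Since $G$ has at most countably many such plateaus, the exceptional set of $z$ is Lebesgue-null; for almost every $z$ the zero is unique and equals $X^\star(z)=Y(z+(m-1)/2)$, so the argument of the previous paragraph again gives monotone convergence $X(t,z)\to X^\star(z)$ for a.e.\ $z$. (The non-Lipschitz character of $G$ is harmless here: the monotone, one-sided Lipschitz structure of $-\Phi_z$ already delivers the energy estimate and the barrier property, without appealing to classical uniqueness.)

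Finally, I would assemble the pointwise statements. For a.e.\ $z$ the integrand $|X(t,z)-X^\star(z)|^2$ is non-increasing in $t$ and tends to $0$, and it is dominated by $|X(0,z)-X^\star(z)|^2\in L^1(0,1)$ (finite because $\mu_0,\mu^\star\in\prb_2(\setR)$). By dominated convergence—indeed by monotone convergence, since each term decreases—the quantity $W_2(\mu(t),\mu^\star)^2=\int_0^1|X(t,z)-X^\star(z)|^2\dd z$ is non-increasing in $t$ and converges to $0$, which is precisely the claimed monotone convergence.
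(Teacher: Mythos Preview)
Your proof is correct and follows essentially the same route as the paper: pass to the pseudo-inverse, observe that for $q_r=1$ equation \eqref{eq:evol} decouples into a $z$-indexed family of scalar autonomous ODEs with monotone right-hand side, show pointwise monotone convergence $X(t,z)\to X^\star(z)$ for (almost) every $z$, and conclude via monotone convergence for $W_2^2$. The only cosmetic differences are that the paper phrases the barrier/convergence argument via Lipschitz continuity of $U=\psi_a'\ast\omega$ (Lemma~\ref{lem:18}) and monotone boundedness of $y(t)$, whereas you use the energy identity $\tfrac{\df}{\df t}\tfrac12|X-X^\star|^2\le0$; and your remark that $G$ might be ``non-Lipschitz'' is unnecessary, since $\omega\in L^\infty_c$ makes $G$ Lipschitz.
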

\begin{proof}
  Recall that $X(t,\cdot)=X_{\mu(t)}$ denotes the (time-dependent) inverse distribution function of a transient solution.
  Define $U=\psi_a'\ast\omega$ as in the proof of Proposition \ref{prp:steady-stat-equat},
  and recall further that the inverse distribution function $X^\star$ of $\mu^\star$ satisfies $U(X^\star(z))=2z-1$ for all $z\in[0,1)$.

  Fix some $z\in[0,1)$.
  By Definition \ref{dfn:transient} of transient solutions, $X$ is a continuously differentiable curve in $\cadlag([0,1))$.
  Hence $y:[0,\infty)\to\setR$ with $y(t)=X(t,z)$ is continuously differentiable,
  and satisfies --- in view of \eqref{eq:evol} --- the ordinary differential equation
  \begin{equation*}
    \dot y(t) = U(y^\star) - U(y(t)),
  \end{equation*}
  where $y^\star=X^\star(z)$.
  Since $U:\setR\to\setR$ is a Lipschitz continuous function (see Lemma \ref{lem:18}),
  different solution curves to this differential equations cannot cross, 
  so either $y(t)\le y^\star$ or $y(t)\ge y^\star$ consistently for all $t\ge0$.
  Since further $U$ is monotonically increasing, it follows that $y(t)$ is bounded and monotone in time ---
  increasing if $y(0)\le y^\star$ and decreasing if $y(0)\ge y^\star$.
  Thus $y(t)$ converges (monotonically) to some limit $y_\infty$ as $t\to\infty$.
  By continuity of $U$, it follows that $0=U(y_\infty)-U(y^\star)$.
  %
  %
  If $q_a>1$, then \emph{strict} monotonicity of $U$ directly implies $y_\infty=y^\star$.
  If $q_a=1$,  then
  \begin{align*}
    U(x) = - \int_{-\infty}^0 \omega(x-y)dy +\int_{0}^{+\infty} \omega(x-y)dy =1 -2 G(x) 
  \end{align*}
  is strictly increasing \emph{except} possibly for an at most countable number of intervals on which it is constant.
  By definition of the pseudo-inverse, $X^\star$ never takes values \emph{inside} these intervals.
  Instead, there is an at  most countable number of values $\tilde z\in(0,1)$ for which $X^\star(\tilde z)$ is the end point of one such interval.
  For all other (i.e., almost all) values $z\in(0,1)$, $U$ is strictly monotone at $X^\star(z)$.
  For each of those $z$, we conclude $y_\infty=y^\star$ as well.
  %

  In summary, for (almost, if $q_a=1$) all $z\in(0,1)$, the values $X(t,z)$ converge to their respective limit $X^\star(z)$ monotonically as $t\to\infty$.
  The monotone convergence theorem now implies that
  \begin{align*}
    W_2(\mu(t),\mu^\star)^2 = \int_0^1 |X(t,\zeta)-X^\star(\zeta)|^2\dd\zeta 
  \end{align*}
  tends to zero monotonically.
\end{proof}
\begin{remark}[No \( W_\infty \)-convergence]
  If \( q_r = q_a = m = 1 \), then \( X^\star = Y \).
  Above, we have proved that \( \mu(t) \rightarrow \omega \) in \( \mathcal{P}_2(\mathbb{R}) \).
  It is easily seen from \eqref{eq:evol} that the support of $\mu(t)$ remains bounded uniformly in time. 
  Consequently, convergence holds in \emph{any} Wasserstein distance $W_p$ with finite $p$ as well.
  However, we cannot in general expect convergence in $W_\infty$. 
  Indeed, assume that \( \underline X:=\inf \supp \mu_0 < \inf \supp \omega \).
  Then the left edge $X(t,0)$ of $\mu(t)$'s support satisfies
  \begin{equation*}
    \partial_t X(t,0) = 2G(X(t,0)), \quad X(0,0) = \underline X.
  \end{equation*}
  Since $G(\underline X)=0$,
  this initial value problem has the (unique, since $G$ is Lipschitz continuous) solution $X(t,0)=\underline X$ for all $t\ge0$.
  Hence the left edge of $\mu(t)$'s support remains at constant distance to $\omega$'s support.
\end{remark}
\begin{theorem}[Asymptotics for smaller mass]
  \label{thm:q1bis}
  Assume that $q_a=q_r=1$ and $m<1$.
  Let $\mu:[0,\infty)\to\prb(\setR)$ be a transient solution in the sense of Definition \ref{dfn:transient}.
  Then $\mu$ converges to $\omega$ vaguely as $t\to\infty$.
\end{theorem}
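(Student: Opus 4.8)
The plan is to exploit the pointwise–in–$z$, decoupled structure of the evolution equation \eqref{eq:229} and to show that, of the unit mass carried by $\mu(t)$, a central portion of mass $m$ relaxes onto $\omega$ while the excess mass $1-m$ splits into two equal halves that are transported off to $-\infty$ and $+\infty$. Fix $z\in[0,1)$ and put $y(t)=X(t,z)$. By Definition \ref{dfn:transient} and \eqref{eq:229}, $y$ is a continuously differentiable solution of the autonomous scalar ODE $\dot y=h(y)$ with $h(x)=2z-2G(x)+m-1$. Since $\omega\in L^\infty_c$, its CDF $G$ is Lipschitz, continuous and nondecreasing, with $G(x)=0$ for $x$ below, and $G(x)=m$ for $x$ above, $\supp\omega$. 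Hence $h$ is Lipschitz and nonincreasing, with $\lim_{x\to-\infty}h(x)=2z+m-1$ and $\lim_{x\to+\infty}h(x)=2z-m-1$; in particular solutions are unique and monotone in $t$.

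First I would read off the long–time behaviour from the sign of $h$. If $z<(1-m)/2$ then $h<0$ everywhere, so $y$ decreases and $X(t,z)\to-\infty$; if $z>(1+m)/2$ then $h>0$ everywhere and $X(t,z)\to+\infty$. If instead $z\in\big((1-m)/2,(1+m)/2\big)$, then $h$ changes sign, so $y(t)$ is bounded and converges monotonically to an equilibrium $y_\infty$ characterised by $G(y_\infty)=z+(m-1)/2\in(0,m)$. Whenever $G$ is strictly increasing at that level — which fails only for the at most countably many values of $z$ associated with intervals on which $\omega$ vanishes — the equilibrium is unique and equals $Y(z+(m-1)/2)$, where $Y=X_\omega$. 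Discarding these values together with the two endpoints $(1\pm m)/2$, which form a Lebesgue–null set, we obtain
\[
  X(t,z)\longrightarrow Y\big(z+\tfrac{m-1}{2}\big)\quad\text{for a.e. } z\in\big(\tfrac{1-m}{2},\tfrac{1+m}{2}\big),
\]
while $|X(t,z)|\to\infty$ for a.e. $z$ outside $\big[\tfrac{1-m}{2},\tfrac{1+m}{2}\big]$.

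To conclude, I would test against an arbitrary $\varphi\in C^0(\setR)$ with compact support, say $\supp\varphi\subseteq[-R,R]$. By the substitution formula of Lemma \ref{lem:16},
\[
  \int_\setR\varphi\,\dd\mu(t)=\int_0^1\varphi\big(X(t,z)\big)\,\dd z,
\]
and I split this $z$–integral over the three ranges above. On the two outer ranges $|X(t,z)|\to\infty$ pointwise, hence $\varphi(X(t,z))\to0$ a.e.; since $\varphi$ is bounded, dominated convergence kills both contributions. On the middle range $\varphi(X(t,z))\to\varphi\big(Y(z+\tfrac{m-1}{2})\big)$ a.e., and dominated convergence together with the substitution $w=z+(m-1)/2$ and a second application of Lemma \ref{lem:16} (now to $\omega$) give
\[
  \int_{(1-m)/2}^{(1+m)/2}\!\!\varphi\big(X(t,z)\big)\,\dd z
  \longrightarrow \int_{(1-m)/2}^{(1+m)/2}\!\!\varphi\big(Y(z+\tfrac{m-1}{2})\big)\,\dd z
  =\int_0^m\varphi\big(Y(w)\big)\,\dd w=\int_\setR\varphi\,\dd\omega.
\]
Therefore $\int\varphi\,\dd\mu(t)\to\int\varphi\,\dd\omega$ for every such $\varphi$, which is precisely vague convergence of $\mu(t)$ to $\omega$.

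The step I expect to be the main obstacle is the rigorous identification of the a.e.\ limit on the middle range, i.e.\ controlling the intervals on which $\omega$ vanishes: there the ODE has a whole continuum of equilibria and the limit of $y(t)$ depends on the initial datum, so one must argue that the corresponding set of parameters $z$ is countable (hence null) and therefore invisible to the dominated–convergence argument. The remaining ingredients — uniqueness and monotonicity of the scalar flow, and the integrable domination by $\sup|\varphi|$ — are routine.
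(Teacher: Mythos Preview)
Your argument is correct and follows essentially the same route as the paper: split $[0,1)$ into the outer ranges where $|X(t,z)|\to\infty$ and the middle range $[(1-m)/2,(1+m)/2]$ where $X(t,z)\to Y(z+(m-1)/2)$ via the monotone scalar ODE, then pass to the limit under the substitution formula by dominated convergence. The subtlety you flag about the countably many $z$ for which $G$ has a plateau at the relevant level is exactly the point the paper handles (by reference to the analogous step in the proof of Theorem~\ref{thm:q1}), and your treatment of it is fine.
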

\begin{proof}
  First, we are going to prove that $X(t;z)\to-\infty$ as $t\to\infty$ for all $z<(1-m)/2$.
  Indeed, if $z<(1-m)/2$, then \eqref{eq:evol} in combination with $m -2 G(X(t,z)) \leq m$ 
  implies that
  \begin{align*}
    \partial_t X(t,z) = 2z - 1 + m - 2 G(X(t,z)) \leq 2z - 1 + m <0,
  \end{align*}
  hence $X(t,z)\le X_0(z)+(2z-1+m)t\to-\infty$.
  An analogous argument proves that $X(t;z)\to+\infty$ for all $z>(1+m)/2$.

  Next, observe that the stationary equation \eqref{eq:360} is satisfied 
  by $\widetilde{X}^\star(z) := Y(z - \frac{1-m}{2})$ for all \( z \in \left[ (1-m)/2, (1+m)/2 \right] \),
  i.e., for those $z$ we have $2z-1=G(\widetilde X^\star(z))$.
  By applying now the same arguments as in the proof of Theorem \ref{thm:q1}
  --- restricting $z$ to the interval $\left[ (1-m)/2, (1+m)/2 \right]$ ---
  one shows that $X(t,z)\to\widetilde X^\star(z)$ monotonically for all feasible $z$.
  We conclude that \( \mu(t) \) converges vaguely to \( \mu^\star=\omega \): 
  For given \( f \in C^0_c(\mathbb{R}) \), 
  we have
  \begin{align*}
    \int_{\mathbb{R}} f(x) \diff \mu(t,x) = {} & \int_{0}^{1} f(X(t,z)) \diff z\\
    = {} & \int_{(1-m )/2}^{1-(1-m)/2} f(X(t,z)) \diff z \nonumber \\
    {} & + \int_{0}^{(1-m)/2} f(X(t,z)) \diff z + \int_{1-(1-m)/2}^1 f(X(t,z)) \diff z\\
    \rightarrow {} & \int_{(1-m )/2}^{1-(1-m)/2} f(\widetilde X^\star(z)) \diff z = \int_{\mathbb{R}} f(x) \diff \omega(x), \quad t\rightarrow\infty,
  \end{align*}
  by the substitution formula (Lemma \ref{lem:16}) and the dominated convergence theorem.
\end{proof}

\subsection{Equilibration in the general case}
\label{sec:conv-against-steady}
In Sections \ref{sec:case-q-2} and \ref{sec:case-q-1} above, 
we clarified the asymptotic behavior of solutions in the cases \( q_a = q_r = 2 \) and \( q_r = 1 \leq q_a \le 2\) in detail, 
also in situations of shortage and excess of mass for the datum $\omega$.

In this section we establish convergence to a steady state under the more general conditions
\begin{align}
  \label{eq:362}
  1&\le q_r < q_a \leq 2, \quad \text{or}\\
  1&\le q_r = q_a \le 4/3,
\end{align}
and under the additional hypothesis that $m=\omega(\setR)=1$.
Our method of proof uses an energy-energy-dissipation inequality 
combined with a moment bound for the sublevels of the energy, see Proposition \ref{prp:moment-bound} below, 
in order to derive both compactness of trajectories and continuity of the dissipation. 


We start by summarizing some tools from measure theory and the variational properties of the energy functional we need.
\begin{definition}[Uniform integrability]
  A measurable function \( f : \R \rightarrow [0,\infty] \) is \emph{uniformly integrable} 
  with respect to a sequence $(\mu_n)_{n\in\setN}$ of finite measures 
  if
  \begin{equation}
    \label{eq:46}
    \lim_{M \rightarrow \infty} \sup_n \int_{\left\{ f(x) \geq M\right\}} f(x) \diff \mu_n(x) = 0.
  \end{equation}
\end{definition}
\begin{lemma}[Continuity of integral functionals]
  \label{lem:4}
  \cite[Lemma 5.1.7]{AGS08}
  Let $(\mu_n)_{n \in \mathbb N}$ be a sequence in  $\mathcal{P}(\mathbb{R})$  converging narrowly to $\mu \in \mathcal{P}(\setR)$. 
  If $f:\setR\to\setR$ is a continuous function that is uniformly integrable \wrt $(\mu_n)_{n\in\setN}$, 
  then
  \begin{align}
    \label{eq:52}
    \lim_{n\rightarrow\infty} \int_{\mathbb{R}} f(x) \diff \mu_n(x) = {} & \int_{\mathbb{R}} f(x) \diff \mu(x).
  \end{align}
\end{lemma}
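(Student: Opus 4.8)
The statement asserts that if $\mu_n\to\mu$ narrowly in $\prb(\setR)$ and $f\in C^0(\setR)$ is uniformly integrable with respect to $(\mu_n)$, then $\int f\dd\mu_n\to\int f\dd\mu$. Note that $f$ is not assumed bounded, so narrow convergence alone is insufficient; the role of uniform integrability is precisely to control the tails where $f$ is large. I would assume $f\ge0$ as in the statement (writing $f=f^+-f^-$ reduces the general real-valued case to this one, but here the hypothesis already gives $f:\setR\to[0,\infty]$).

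\emph{Truncation.} The plan is to approximate $f$ by its truncations $f_M:=\min\{f,M\}$ for $M>0$. Each $f_M$ is continuous and bounded, so narrow convergence $\mu_n\to\mu$ gives, for every fixed $M$,
\begin{equation*}
  \lim_{n\to\infty}\int_\setR f_M\dd\mu_n=\int_\setR f_M\dd\mu.
\end{equation*}
This is the only place where narrow convergence enters, and it is immediate from the definition \eqref{eq:narrow}.

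\emph{Tail control via uniform integrability.} The difference $f-f_M$ is supported on $\{f\ge M\}$ and satisfies $0\le f-f_M\le f\cdot\mathbf{1}_{\{f\ge M\}}$, so
\begin{equation*}
  0\le\int_\setR(f-f_M)\dd\mu_n\le\int_{\{f\ge M\}}f\dd\mu_n.
\end{equation*}
By the uniform integrability hypothesis \eqref{eq:46}, the supremum over $n$ of the right-hand side tends to $0$ as $M\to\infty$; call this quantity $\varepsilon(M)$, so $\varepsilon(M)\to0$. The same bound applied to $\mu$ (which also arises as a narrow limit, and for which the truncation error is controlled by Fatou applied along $n$, or directly by uniform integrability passing to the limit) gives $\int(f-f_M)\dd\mu\le\liminf_n\int_{\{f\ge M\}}f\dd\mu_n\le\varepsilon(M)$ as well.

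\emph{Combining the estimates.} For fixed $M$, split
\begin{equation*}
  \Big|\int f\dd\mu_n-\int f\dd\mu\Big|
  \le\Big|\int(f-f_M)\dd\mu_n\Big|+\Big|\int f_M\dd\mu_n-\int f_M\dd\mu\Big|+\Big|\int(f_M-f)\dd\mu\Big|.
\end{equation*}
The first and third terms are bounded by $\varepsilon(M)$ uniformly in $n$, and the middle term tends to $0$ as $n\to\infty$ by the truncation step. Hence $\limsup_n|\int f\dd\mu_n-\int f\dd\mu|\le 2\varepsilon(M)$ for every $M$; letting $M\to\infty$ yields the claim. The main subtlety, rather than an obstacle, is the third term: one must ensure the truncation error against the limit measure $\mu$ is also small, which is why I would transfer the uniform-integrability bound to $\mu$ via a lower-semicontinuity/Fatou argument along the narrowly convergent sequence restricted to the open sets $\{f>M\}$, rather than assuming uniform integrability of $f$ against $\mu$ as a separate hypothesis.
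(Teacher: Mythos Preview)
Your argument is correct and follows the standard route: truncate at level $M$, use narrow convergence on the bounded truncation $f_M$, and control the remainder uniformly in $n$ via the uniform-integrability hypothesis. The one genuinely delicate point --- bounding the truncation error against the \emph{limit} measure $\mu$ --- you handle correctly: since $(f-M)^+$ is nonnegative and continuous, lower semicontinuity of $\nu\mapsto\int g\dd\nu$ under narrow convergence (Portmanteau, extended to unbounded nonnegative $g$ by monotone approximation) gives $\int(f-f_M)\dd\mu\le\liminf_n\int(f-f_M)\dd\mu_n\le\varepsilon(M)$.

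One small correction: the lemma as stated has $f:\setR\to\setR$, not $f:\setR\to[0,\infty]$; it is the paper's \emph{definition} of uniform integrability (immediately preceding the lemma) that is phrased for nonnegative functions, so the hypothesis should be read as uniform integrability of $|f|$. Your reduction $f=f^+-f^-$ then applies cleanly, since $\{f^\pm\ge M\}\subseteq\{|f|\ge M\}$ transfers the uniform-integrability bound to each part.

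As for comparison with the paper: the paper does not supply its own proof of this lemma --- it is quoted verbatim from \cite[Lemma~5.1.7]{AGS08} without argument. Your proof is essentially the classical one underlying that reference.
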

\begin{lemma}[Uniform integrability of moments]
  \label{lem:24}
  \cite[Corollary to Theorem 25.12]{95-Billingsley-Proba_and_Measuer}
  A sequence $(\mu_n)_{n\in\setN}$ of probability measures satisfying
  \begin{equation}
    \label{eq:316}
    \sup_n \int_{\mathbb{R}} \left| x \right|^r \diff \mu_n(x) < \infty
  \end{equation}
  for some $r>0$ is tight.
  Moreover, each of the functions $x \mapsto \left| x \right|^q$ with $0<q<r$ is uniformly integrable \wrt $(\mu_n)_{n\in\setN}$.
\end{lemma}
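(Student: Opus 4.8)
The plan is to derive both conclusions from one elementary tail estimate, namely Markov's inequality applied to $|x|^r$, whose right-hand side is uniform in $n$ precisely because of the hypothesis \eqref{eq:316}. Throughout, write $C:=\sup_n\int_\setR|x|^r\dd\mu_n(x)$, which is finite by assumption.

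First I would establish tightness. Fix $\varepsilon>0$. For any radius $R>0$ and any $n$, Markov's inequality gives
\begin{equation*}
  \mu_n\big(\{|x|>R\}\big)\le\frac{1}{R^r}\int_{\{|x|>R\}}|x|^r\dd\mu_n(x)\le\frac{C}{R^r},
\end{equation*}
since $|x|^r>R^r$ on the set in question. Choosing $R$ so large that $C/R^r<\varepsilon$ yields $\mu_n([-R,R])\ge1-\varepsilon$ simultaneously for all $n$; as $[-R,R]$ is compact, this is exactly the assertion of tightness.

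For the uniform integrability of $f(x)=|x|^q$ with $0<q<r$, the key observation is that on the superlevel set $\{|x|^q\ge M\}$ one has $|x|^{r-q}\ge M^{(r-q)/q}$, so that the pointwise bound
\begin{equation*}
  |x|^q=\frac{|x|^r}{|x|^{r-q}}\le M^{-(r-q)/q}\,|x|^r
\end{equation*}
holds there. Integrating over $\{|x|^q\ge M\}$ and enlarging the domain to all of $\setR$ then gives, uniformly in $n$,
\begin{equation*}
  \int_{\{|x|^q\ge M\}}|x|^q\dd\mu_n(x)\le M^{-(r-q)/q}\int_\setR|x|^r\dd\mu_n(x)\le C\,M^{-(r-q)/q}.
\end{equation*}
Because $q<r$ makes the exponent $(r-q)/q$ strictly positive, the right-hand side tends to $0$ as $M\to\infty$ independently of $n$, which is precisely \eqref{eq:46} for $f=|x|^q$.

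There is no genuine analytic obstacle here, as everything reduces to Markov's inequality together with the uniform moment bound $C$. The only point demanding care is the exponent bookkeeping in the last step: one must split off $|x|^{r-q}$ on the correct superlevel set $\{|x|^q\ge M\}$ required by the definition \eqref{eq:46} of uniform integrability (rather than $\{|x|\ge M\}$), and note that it is the \emph{strict} inequality $q<r$ that produces a positive decay exponent and hence the desired limit.
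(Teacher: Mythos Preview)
Your proof is correct. The paper does not actually prove this lemma; it merely cites it as a standard result from Billingsley's \emph{Probability and Measure} (Corollary to Theorem~25.12), so there is no ``paper's own proof'' to compare against. Your argument via Markov's inequality for tightness and the pointwise bound $|x|^q\le M^{-(r-q)/q}|x|^r$ on $\{|x|^q\ge M\}$ for uniform integrability is exactly the standard textbook derivation, and all the exponent bookkeeping is handled correctly.
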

\begin{lemma}[Convergence of product measures]
  \label{lem:27}
 From \cite[Theorem 2.8]{68-Billingsley-Conv-Proba} it follows that if \( \left( \mu_n \right)_n \), \( \left( \nu_n \right)_n \) are two sequences in \( \mathcal{P}(\R) \) and \( \mu, \nu \in \mathcal{P}(\R) \), then
  \begin{equation*}
    \mu_n \otimes \nu_n \rightarrow \mu \otimes \nu \text{ narrowly} \Leftrightarrow \mu_n \rightarrow \mu \text{ and } \nu_n \rightarrow \nu \text{ narrowly}.
  \end{equation*}
\end{lemma}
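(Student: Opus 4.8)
The plan is to prove the two implications separately; the forward implication ``$\Rightarrow$'' is essentially a projection argument, while the reverse implication ``$\Leftarrow$'' carries all the weight. For ``$\Rightarrow$'', assume $\mu_n\otimes\nu_n\to\mu\otimes\nu$ narrowly and fix a bounded $\varphi\in C^0(\R)$. The function $(x,y)\mapsto\varphi(x)$ is bounded and continuous on $\R^2$, so by the definition of narrow convergence and Fubini's theorem (using that each $\nu_n$ is a probability measure),
\begin{equation*}
  \int_\R\varphi\dd\mu_n=\int_{\R^2}\varphi(x)\dd(\mu_n\otimes\nu_n)\longrightarrow\int_{\R^2}\varphi(x)\dd(\mu\otimes\nu)=\int_\R\varphi\dd\mu.
\end{equation*}
Since $\varphi$ is arbitrary, $\mu_n\to\mu$ narrowly, and the argument for $\nu_n\to\nu$ is symmetric.

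For the reverse implication, the first observation is that \emph{product} test functions already pass to the limit. For bounded $\varphi,\psi\in C^0(\R)$, Fubini gives $\int_{\R^2}\varphi(x)\psi(y)\dd(\mu_n\otimes\nu_n)=\bigl(\int\varphi\dd\mu_n\bigr)\bigl(\int\psi\dd\nu_n\bigr)$, and since each factor converges by hypothesis, the product converges to $\int\varphi\dd\mu\cdot\int\psi\dd\nu=\int_{\R^2}\varphi(x)\psi(y)\dd(\mu\otimes\nu)$. The task is then to upgrade this from products to \emph{all} bounded continuous functions on $\R^2$, which need not be uniformly approximable by finite sums of products over the whole plane.

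To bridge this gap I would invoke tightness. Narrow convergence of $(\mu_n)_n$ and of $(\nu_n)_n$ implies, via Prokhorov's theorem, that both sequences are tight; hence for each $\varepsilon>0$ there are compacts $K_1,K_2\subset\R$ with $\sup_n\mu_n(\R\setminus K_1)<\varepsilon$ and $\sup_n\nu_n(\R\setminus K_2)<\varepsilon$, and the estimate $(\mu_n\otimes\nu_n)\bigl(\R^2\setminus(K_1\times K_2)\bigr)\le\mu_n(\R\setminus K_1)+\nu_n(\R\setminus K_2)<2\varepsilon$ shows that $(\mu_n\otimes\nu_n)_n$ is tight on $\R^2$, hence relatively compact for narrow convergence. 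It therefore suffices to show that every narrowly convergent subsequence has the same limit. If such a subsequence converges to $\pi\in\prb(\R^2)$, the product computation above forces $\int\varphi(x)\psi(y)\dd\pi=\int\varphi(x)\psi(y)\dd(\mu\otimes\nu)$ for all bounded continuous $\varphi,\psi$; taking $\varphi(x)=\e^{\i\xi x}$ and $\psi(y)=\e^{\i\eta y}$ shows that $\pi$ and $\mu\otimes\nu$ share the same characteristic function, whence $\pi=\mu\otimes\nu$ by injectivity of the Fourier transform on finite measures. Since all subsequential limits coincide, the full sequence converges to $\mu\otimes\nu$.

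The main obstacle is precisely this identification step: one must rule out a tight limit $\pi$ that agrees with $\mu\otimes\nu$ only against product functions yet differs elsewhere. The characteristic-function argument dispatches this cleanly, since the exponentials are themselves products $\e^{\i\xi x}\cdot\e^{\i\eta y}$. Alternatively one could argue that rectangles form a $\pi$-system generating the Borel $\sigma$-algebra of $\R^2$ and invoke a Dynkin (monotone class) argument, but that route requires approximating indicators of rectangles by continuous functions, which the Fourier approach avoids.
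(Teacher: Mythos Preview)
Your argument is correct. The forward direction is the obvious projection, and your reverse direction via tightness plus identification of subsequential limits through characteristic functions is a clean, standard route; every step is justified on $\R$ (a Polish space), and the use of $\e^{\i\xi x}\e^{\i\eta y}$ as product test functions is exactly what makes the identification work.

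As for comparison with the paper: there is nothing to compare against. The paper does not supply a proof of this lemma at all --- it simply invokes \cite[Theorem 2.8]{68-Billingsley-Conv-Proba} as a black box. Your write-up therefore provides a self-contained substitute for that citation. If anything, Billingsley's original argument proceeds slightly differently (working with continuity sets of the limit rather than characteristic functions), but your version is equally valid and arguably more direct for probability measures on Euclidean space.
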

For what follows, we recall the definition of the energy functional:
\begin{align}
  \label{eq:nrg}
  \mathcal{E}[\mu] 
  = - \frac{1}{2} \int_{\mathbb R}\int_{\mathbb R} \psi_r(x-y)  \diff \mu(x) \diff \mu(y)  
  + \int_{\mathbb R} \int_{\mathbb R} \psi_a(x-y) \diff \omega(x) \diff \mu(y).
\end{align}
\begin{proposition}[Moment bound]
  \label{prp:moment-bound}
  \cite[Theorem 2.7, Theorem 4.1]{13-FornasierHuetter-VarProperties}
  Under our general hypotheses that $1\le q_r\le q_a\le2$ and $\omega \in \mathcal{P}_2(\mathbb{R})$,
  the energy functional $\mathcal{E}$ is bounded from below in $\mathcal{P}_2(\mathbb{R})$.
  Moreover,
  \begin{itemize}
  \item if $q_a > q_r$, then the \( q_a\)th moment, and 
  \item if $q_a = q_r$, then the \( r \)th moments for all \( 0 < r < q_a/2 \) 
  \end{itemize}
  are uniformly bounded in each of the sublevels of $\mathcal{E}$ in $\mathcal{P}_2(\mathbb{R})$.
\end{proposition}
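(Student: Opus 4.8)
The plan is to treat the two regimes $q_a>q_r$ and $q_a=q_r$ by entirely different mechanisms; throughout write $M_r[\mu]:=\int_\setR|x|^r\dd\mu(x)$.

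\emph{The strictly attraction-dominated case $q_a>q_r$.} Here attraction beats repulsion at large scales, so one absorbs the repulsive term into the attractive one. The convexity inequality $|y|^{q_a}\le2^{q_a-1}\bigl(|x-y|^{q_a}+|x|^{q_a}\bigr)$, integrated against $\omega\otimes\mu$ and using that $\omega\in\prb_2(\setR)$ has finite $q_a$th moment (since $q_a\le2$), gives
\[
\int_\setR\int_\setR\psi_a(x-y)\dd\omega(x)\dd\mu(y)\ge2^{1-q_a}M_{q_a}[\mu]-M_{q_a}[\omega].
\]
Symmetrically $|x-y|^{q_r}\le2^{q_r-1}\bigl(|x|^{q_r}+|y|^{q_r}\bigr)$ yields $\int\!\int\psi_r\dd\mu\dd\mu\le2^{q_r}M_{q_r}[\mu]$, and since $q_r<q_a$ Young's inequality gives $|x|^{q_r}\le\varepsilon|x|^{q_a}+C_\varepsilon$, hence $M_{q_r}[\mu]\le\varepsilon M_{q_a}[\mu]+C_\varepsilon$. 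Inserting both estimates into \eqref{eq:nrg} and choosing $\varepsilon$ so small that the repulsion absorbs at most half the attraction coefficient, one obtains $\mathcal{E}[\mu]\ge c\,M_{q_a}[\mu]-C$ with $c>0$. This proves boundedness from below and, on the sublevel $\{\mathcal{E}\le S\}$, the bound $M_{q_a}[\mu]\le(S+C)/c$.

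\emph{The balanced case $q_a=q_r=:q$,} which in the application satisfies $q\le4/3$, so $q<2$. Now no absorption is possible, and instead I would use the conditional positive definiteness of $-|z|^q$. With $\sigma:=\mu-\omega$ (total mass zero), expanding \eqref{eq:nrg} gives the exact identity
\[
\mathcal{E}[\mu]-\mathcal{E}[\omega]=-\tfrac12\int_\setR\int_\setR|x-y|^q\dd\sigma(x)\dd\sigma(y)=:\tfrac12\|\sigma\|_\ast^2 .
\]
Via the L\'evy--Khintchine representation $|z|^q=c_q\int_\setR(1-\cos(z\xi))|\xi|^{-1-q}\dd\xi$ with $c_q>0$ (valid exactly for $0<q<2$) and $\sigma(\setR)=0$, one has $\|\sigma\|_\ast^2=c_q\int_\setR|\hat\sigma(\xi)|^2|\xi|^{-1-q}\dd\xi\ge0$, so $\|\cdot\|_\ast$ is the homogeneous $\dot H^{-(1+q)/2}$ seminorm. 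This already shows $\mathcal{E}\ge\mathcal{E}[\omega]$ and that every sublevel is a ball $\{\|\mu-\omega\|_\ast^2\le K\}$.

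It remains to convert this negative-order bound into control of $M_r[\mu]$ for $r<q/2$, which is the technical core. The threshold is dictated by homogeneity: displacing a mass $\delta$ to distance $D$ raises $\|\sigma\|_\ast^2$ by $\sim\delta^2D^q$ but raises $M_r[\mu]$ only by $\delta D^r$, so $\delta^2D^q\le K$ forces $\delta D^r\le\sqrt K\,D^{r-q/2}$, bounded as $D\to\infty$ precisely when $r<q/2$. To make this rigorous the cleanest route is physical-space: for $q=1$ one has the identity $\|\sigma\|_\ast^2=2\int_\setR\bigl(F_\mu(x)-G(x)\bigr)^2\dd x$, and since $F_\mu(x)-G(x)=\omega((x,\infty))-\mu((x,\infty))$, the bound $\|\sigma\|_\ast^2\le K$ together with $\omega\in\prb_2(\setR)$ forces the tail $x\mapsto\mu((x,\infty))$ into $L^2([R,\infty))$; a layer-cake representation of $M_r[\mu]$ followed by Cauchy--Schwarz then gives finiteness exactly for $r<1/2=q/2$. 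For general $0<q<2$ the same computation runs with the Riesz potential $I^{(1+q)/2}\sigma$ (whose $L^2$ norm is $\|\sigma\|_\ast$) in place of the primitive, yielding the identical threshold. The main obstacle is precisely this last passage: the energy seminorm is nonlocal and controls mass only quadratically, so isolating the far tail of $\mu$ and summing its contribution is delicate, and succeeds only in the strict range $r<q/2$. Finally, $q=2$ is genuinely excluded from the balanced estimate: there $\|\sigma\|_\ast^2$ collapses to $2(\,\mathrm{mean}\,\mu-\mathrm{mean}\,\omega)^2$, controlling only the mean (consistent with Section~\ref{sec:case-q-2}), so no nontrivial moment bound can hold.
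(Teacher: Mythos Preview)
Your approach is essentially the one taken in the paper. For $q_a>q_r$ you use the same elementary convexity bounds on $|x\pm y|^q$ to show $\mathcal{E}[\mu]\ge c\,M_{q_a}[\mu]-C$; the paper's Appendix~\ref{sec:moment-bound-subl} does precisely this (with slightly different constants, and choosing a large radius $R$ in place of your Young splitting, but the mechanism is identical).

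For the balanced case $q_a=q_r=q<2$, your route via the L\'evy--Khintchine identity is again the same as the paper's: completing the square to obtain $\mathcal{E}[\mu]-\mathcal{E}[\omega]=D_q\int_\setR|\hat\mu-\hat\omega|^2|\xi|^{-1-q}\dd\xi$ (your $\|\sigma\|_\ast^2$). The one point of divergence is the final passage from the $\dot H^{-(1+q)/2}$ bound to moment control. You sketch a physical-space argument via the layer-cake formula and Cauchy--Schwarz on the tail, which you carry out for $q=1$ (where $\|\sigma\|_\ast^2$ is the $L^2$ norm of $F_\mu-G$) and then assert extends to general $q$ via Riesz potentials; this last extension is where your sketch is thinnest. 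The paper instead stays entirely on the Fourier side: writing $M_r[\mu]=\int|x|^r\dd(\mu-\delta_0)$ and invoking the generalized Fourier transform of $|\cdot|^r$, one lands on $\int|\xi|^{-1-r}(\hat\mu-1)\dd\xi$, splits at $|\xi|=1$, and applies H\"older with the factorization $|\xi|^{-1-r}=|\xi|^{-(1-q+2r)/2}\cdot|\xi|^{-(1+q)/2}$, which is square-integrable near the origin exactly when $r<q/2$. This handles all $q\in(0,2)$ uniformly and makes the threshold transparent without any case distinction; your physical-space picture is more intuitive for $q=1$ but would require genuine extra work (fractional integration estimates on the tail) to be made rigorous for general $q$.
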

As mentioned, this result was proven in \cite{13-FornasierHuetter-VarProperties}. 
To facilitate the reading, the main ideas of the proof are briefly revisited in Appendix \ref{sec:moment-bound-subl}. 
%
%
\begin{lemma}[Dissipation formula]
  \label{lem:28}
  Let $1<q_r\le q_a\le2$, and let $\mu:[0,\infty)\to\prb_2(\setR)$ be a transient solution in the sense of Definition \ref{dfn:transient}. 
  Then $t\mapsto\mathcal{E}[\mu(t)]$ is differentiable, and its dissipation is given by
  \begin{align}
    \label{eq:368}
    - \frac{\diff}{\diff t} \mathcal{E}[\mu(t)] 
    = \mathcal{D}[\mu(t)]
    := \int_{0}^{1} \left| \int_{0}^{1} \left[ \psi_a'(X(t,z) - Y(\zeta)) - \psi_r'(X(t,z) - X(t,\zeta)) \right] \diff \zeta \right|^2 \diff z.
  \end{align}
\end{lemma}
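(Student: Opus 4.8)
The plan is to differentiate the energy $\mathcal E[\mu(t)]$ directly, using the pseudo-inverse representation of all the integrals so that the time derivative falls only on the curve $t\mapsto X(t,\cdot)$, which by Definition~\ref{dfn:transient} is $C^1$ as a map into $\cadlag([0,1))$. First I would rewrite $\mathcal E[\mu(t)]$ entirely in terms of $X=X(t,\cdot)$ and the fixed pseudo-inverse $Y$ of $\omega$. Applying the convolution representation \eqref{eq:2} (legitimate here because $1<q_r\le q_a\le2$ forces $\psi_a,\psi_r$ to have continuous, at most linearly growing derivatives) and Lemma~\ref{lem:16} to both double integrals, I obtain
\begin{align*}
  \mathcal E[\mu(t)] = -\frac12\int_0^1\!\!\int_0^1 \psi_r\big(X(t,z)-X(t,\zeta)\big)\dd\zeta\dd z
  + \int_0^1\!\!\int_0^m \psi_a\big(X(t,z)-Y(\zeta)\big)\dd\zeta\dd z.
\end{align*}

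Next I would differentiate under the integral sign. The attraction term is the easier one: since $Y$ is fixed in time, $\frac{\diff}{\diff t}\psi_a(X(t,z)-Y(\zeta)) = \psi_a'(X(t,z)-Y(\zeta))\,\partial_t X(t,z)$, and integrating in $\zeta$ and $z$ gives $\int_0^1\big(\int_0^m\psi_a'(X(t,z)-Y(\zeta))\dd\zeta\big)\partial_t X(t,z)\dd z$. For the repulsion term I would use the symmetry of $\psi_r$, namely $\psi_r'(-s)=-\psi_r'(s)$, so that the two contributions arising from differentiating $X(t,z)$ and $X(t,\zeta)$ coincide after relabeling; the factor $-\frac12$ then combines with the factor $2$ from symmetry to yield $+\int_0^1\big(\int_0^1\psi_r'(X(t,z)-X(t,\zeta))\dd\zeta\big)\partial_t X(t,z)\dd z$. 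Adding the two pieces, the bracketed quantity multiplying $\partial_t X(t,z)$ is exactly $\int_0^m\psi_a'(X(t,z)-Y(\zeta))\dd\zeta - \int_0^1\psi_r'(X(t,z)-X(t,\zeta))\dd\zeta$, which by the evolution equation \eqref{eq:evol} equals $-\partial_t X(t,z)$. Therefore $\frac{\diff}{\diff t}\mathcal E[\mu(t)] = -\int_0^1 |\partial_t X(t,z)|^2\dd z = -\mathcal D[\mu(t)]$, giving \eqref{eq:368}.

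The main obstacle is justifying the differentiation under the integral sign and the interchange of $\frac{\diff}{\diff t}$ with the $z$- and $\zeta$-integrations. To handle this rigorously I would exploit that $X$ is a $C^1$ curve into $\cadlag([0,1))$ equipped with the sup-norm: this gives a uniform (in $z,\zeta$) bound on $\partial_t X$ on each compact time interval, and the compact support of $\mu(t)$ together with the fixed compact support of $\omega$ confines the arguments $X(t,z)-X(t,\zeta)$ and $X(t,z)-Y(\zeta)$ to a bounded set, on which $\psi_a'$ and $\psi_r'$ are bounded and (locally) Lipschitz. A dominated-convergence argument applied to the difference quotients then legitimizes the exchange. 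A secondary point worth stating carefully is that, even though $\psi_r'$ is only $C^{0,q_r-1}$ at the origin, the diagonal set $\{z=\zeta\}$ has measure zero in $[0,1]^2$ and $\psi_r(0)=0$ with $\psi_r$ itself continuous, so no singular contribution arises; the symmetrization step uses only continuity of $\psi_r'$ away from $0$ and boundedness near $0$, both of which hold since $q_r>1$.
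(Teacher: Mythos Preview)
Your proposal is correct and follows essentially the same route as the paper: rewrite $\mathcal E[\mu(t)]$ via the substitution formula, differentiate under the integral using the $C^1$-regularity of $t\mapsto X(t,\cdot)$ in $\cadlag([0,1))$ together with dominated convergence, exploit the anti-symmetry of $\psi_r'$ to collapse the repulsion term, and then insert \eqref{eq:evol}. One small slip: the sign you obtain for the repulsion contribution should be $-\int_0^1\big(\int_0^1\psi_r'(\cdots)\dd\zeta\big)\partial_t X(t,z)\dd z$ (since $-\tfrac12\cdot 2=-1$), which is precisely what makes the bracket come out as $\int\psi_a'-\int\psi_r'$ in the next line.
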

\begin{proof}
  By Lemma \ref{lem:16}, the energy can be written in the form
  \begin{align*}
    \mathcal{E}[\mu(t)] 
    = \int_{0}^{1} \int_{0}^{1} \psi_a(X(t,z) - Y(\zeta)) \diff \zeta \diff z 
    - \frac{1}{2} \int_{0}^{1} \int_{0}^{1} \psi_r(X(t,z) - X(t, \zeta)) \diff \zeta \diff z.
  \end{align*}
  Since \( \psi_r' \) and \( \psi_a' \) are continuous functions and \( X(.,.) \in C^1([0,T], \cadlag[0,1]) \), the appearing derivatives will be bounded uniformly in \( t \in [0,T] \), so differentiating under the integral sign is justified by the dominated convergence theorem, yielding
  \begin{align}
    \frac{\diff}{\diff t} \mathcal{E}[\mu(t)] = {} & \int_{0}^{1} \int_{0}^{1} \psi_a'(X(t,z) - Y(\zeta)) \partial_t X(t,z) \diff \zeta \diff z \nonumber \\
    & \quad - \frac{1}{2} \int_{0}^{1} \int_{0}^{1} \psi_r'(X(t,z) - X(t, \zeta)) \left( \partial_t X(t,z) - \partial_t X(t,\zeta) \right) \diff \zeta \diff z \nonumber \\
     {} = & \int_{0}^{1} \int_{0}^{1} \int_{0}^{1} \left[ \psi_a'(X(t,z) - Y(\zeta)) - \psi_r'(X(t,z) - X(t, \zeta)) \right] \nonumber\\
     & \quad \cdot \left[ - \psi_a'(X(t,z) - Y(\xi)) + \psi_r'(X(t,z) - X(t,\xi)) \right] \diff \xi \diff \zeta \diff z \label{eq:343}\\
     = {} & -\int_{0}^{1} \left| \int_{0}^{1} \left[ \psi_a'(X(t,z) - Y(\zeta)) - \psi_r'(X(t,z) - X(t,\zeta)) \right] \diff \zeta \right|^2 \diff z,\nonumber
  \end{align}
  where in equation \eqref{eq:343}, we inserted \eqref{eq:evol}, and used the anti-symmetry of \( \psi_r' \).
\end{proof}
We are now in the position to state and prove the main result of this section.
\begin{theorem}[Convergence of a sequence]
  \label{thm:1q2}
  Assume that either
  \begin{equation}
    \label{eq:346}
    1 < q_r < q_a < 2
  \end{equation}
  or 
  \begin{equation}
    \label{eq:347}
    1 < q_a = q_r < \frac{4}{3}.
  \end{equation}
  Then, for each transient solution $\mu:[0,\infty)\to\prb(\setR)$, 
  there is a sequence \( (t_k)_{k \in \mathbb{N}} \) with $t_k \rightarrow \infty$ 
  such that $\mu(t_k)$ converges narrowly to a steady state $\mu^\star$.
\end{theorem}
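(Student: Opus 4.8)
The plan is to run the standard energy–dissipation scheme, but carried out entirely in the pseudo-inverse variable $X(t,\cdot)$, with the moment bounds of Proposition \ref{prp:moment-bound} supplying the compactness that is otherwise unavailable for such singular nonlocal kernels.

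\emph{Step 1 (a vanishing-dissipation sequence).} By Lemma \ref{lem:28} the map $t\mapsto\mathcal{E}[\mu(t)]$ is differentiable with $-\tfrac{\diff}{\diff t}\mathcal{E}[\mu(t)]=\mathcal{D}[\mu(t)]\ge0$, so $\mathcal{E}[\mu(t)]$ is nonincreasing; since $\mathcal{E}$ is bounded below on $\prb_2(\setR)$ by Proposition \ref{prp:moment-bound}, it converges to a finite limit. Integrating over $[0,\infty)$ yields $\int_0^\infty\mathcal{D}[\mu(t)]\dd t=\mathcal{E}[\mu_0]-\lim_{t\to\infty}\mathcal{E}[\mu(t)]<\infty$, whence $\liminf_{t\to\infty}\mathcal{D}[\mu(t)]=0$ and there is a sequence $t_k\to\infty$ with $\mathcal{D}[\mu(t_k)]\to0$.

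\emph{Step 2 (compactness).} As $\mathcal{E}[\mu(t)]\le\mathcal{E}[\mu_0]$ for all $t\ge0$, the whole orbit sits in one sublevel set of $\mathcal{E}$. On that set Proposition \ref{prp:moment-bound} bounds the $q_a$th moment (if $q_a>q_r$) or every $r$th moment with $0<r<q_a/2$ (if $q_a=q_r$) uniformly, so by Lemma \ref{lem:24} the family $(\mu(t_k))_k$ is tight. Prokhorov's theorem then gives a subsequence, not relabeled, with $\mu(t_k)\to\mu^\star$ narrowly for some $\mu^\star\in\prb(\setR)$. Equivalently, the pseudo-inverses satisfy $X(t_k,z)\to X^\star(z):=X_{\mu^\star}(z)$ at every continuity point of $X^\star$, hence for a.e.\ $z\in[0,1)$, and the moment bound passes to $\mu^\star$ by lower semicontinuity.

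\emph{Step 3 (identification of the limit).} It remains to show $\mathcal{D}[\mu^\star]=0$, for then the integrand of \eqref{eq:368} vanishes for a.e.\ $z$, i.e.\ $X^\star$ solves the steady-state equation \eqref{eq:evoloo} and $\mu^\star$ is a steady state. This is the main obstacle: one must pass to the limit $k\to\infty$ in the nonlinear, singular functional \eqref{eq:368}. Fix a.e.\ $z$. Since $q_a,q_r>1$, the kernels $\psi_a',\psi_r'$ are continuous; using the boundedness of $Y$ and the elementary bound $|\psi_r'(a-b)|\le q_r(|a|^{q_r-1}+|b|^{q_r-1})$ together with the uniform integrability of $\zeta\mapsto|X(t_k,\zeta)|^{q_r-1}$ from Lemma \ref{lem:24}, Vitali's theorem shows that the inner bracket of \eqref{eq:368} converges pointwise in $z$. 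The delicate part is the outer $z$-integral: because of the \emph{square}, its integrand grows like $|X(t_k,z)|^{2(q_a-1)}+|X(t_k,z)|^{2(q_r-1)}$ (plus a constant controlled by the bounded $(q_r-1)$th moment), and uniform integrability of this in $z$ forces the largest exponent $2(q_a-1)$ to stay below the available moment. This is exactly where the hypotheses enter: $2(q_a-1)<q_a$, i.e.\ $q_a<2$, in the attraction-dominated case \eqref{eq:346}, and $2(q_a-1)<q_a/2$, i.e.\ $q_a<4/3$, in the balanced case \eqref{eq:347}. Under either assumption the squared integrand is uniformly integrable, so Vitali's theorem gives $\mathcal{D}[\mu(t_k)]\to\mathcal{D}[\mu^\star]$; since the left-hand side tends to $0$ and $\mathcal{D}\ge0$, we conclude $\mathcal{D}[\mu^\star]=0$.

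\emph{Closing remarks.} The mass hypothesis $m=1$ is used so that $\mu^\star$ and $\omega$ carry equal mass and \eqref{eq:evoloo} is consistent; a short additional argument, exploiting that $X^\star$ is monotone and right-continuous, upgrades the almost-everywhere identity to the pointwise statement required in Definition \ref{dfn:steady}. I expect the hardest technical point to be the uniform-integrability estimate for the squared dissipation integrand in Step 3, as it is precisely the bookkeeping of the exponents $2(q_a-1)$ versus the moment available from Proposition \ref{prp:moment-bound} that both closes the argument and pins down the admissible range of $q_a,q_r$.
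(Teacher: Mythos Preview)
Your proposal is correct and follows the same overall strategy as the paper: extract a vanishing-dissipation sequence from the energy--dissipation identity, use Proposition~\ref{prp:moment-bound} for tightness, and then argue continuity of $\mathcal{D}$ along the subsequence by checking that the exponent $2(q_a-1)$ stays strictly below the moment order supplied by Proposition~\ref{prp:moment-bound}. The only difference is in the packaging of Step~3: the paper expands $\mathcal{D}[\mu]$ as a sum of triple integrals against the product measures $\mu\otimes\omega\otimes\omega$, $\mu\otimes\mu\otimes\omega$, $\mu\otimes\mu\otimes\mu$ (see \eqref{eq:352}), invokes Lemma~\ref{lem:27} for narrow convergence of the tensor products, and then applies Lemma~\ref{lem:4} with the pointwise bounds $|\psi_a'(x-y')\psi_a'(x-z')|\le C(1+|x|^{2q_a-2}+\cdots)$. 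Your route via a.e.\ convergence of the pseudo-inverses $X(t_k,\cdot)\to X^\star$ and two applications of Vitali's theorem is equally valid and leads to the identical exponent condition; the product-measure formulation just avoids the extra step of translating narrow convergence into a.e.\ convergence of $X_{\mu(t_k)}$ and keeps the uniform-integrability check in a single layer rather than an inner and an outer one.
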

\begin{proof}
  By Lemma \ref{lem:28}, we know that
  \begin{align}
    \label{eq:dissip}
    \int_{0}^{t} \mathcal{D}[\mu(\tau)] \diff \tau =
    \mathcal{E}[\mu(0)] - \mathcal{E}[\mu(t)] ,
  \end{align}
  where \( \mathcal{D}[\mu(\tau)] \geq 0 \) for all \( \tau \), so the energy is monotonically decreasing.
  Moreover, by Proposition \ref{prp:moment-bound}, we know that \( \mathcal{E} \) is bounded from below, 
  yielding convergence of the integral in \eqref{eq:dissip} for \( t\rightarrow\infty \).
  Therefore, there is a sequence of times $t_k\to\infty$ for which
  \begin{equation*}
    \mathcal{D}[\mu(t_k)] \rightarrow 0, \quad\text{ as $k \rightarrow\infty$}.
  \end{equation*}
  Recalling our hypotheses \eqref{eq:346} and \eqref{eq:347} on $q_a$ and $q_r$,
  it follows by the second part of Proposition \ref{prp:moment-bound}
  that some positive moment of $\mu(t_k)$ is uniformly bounded as $t_k\to\infty$.
  By Lemma \ref{lem:24}, there is a subsequence --- still denoted by $(t_k)_{k\in\setN}$ --- 
  such that $\mu(t_k)$ converges narrowly to a limit $\mu^\star\in \mathcal{P}(\mathbb{R})$.

  It remains to verify that $\mu^\star$ is a stationary state.
  This is done by proving that $\mathcal{D}[\mu^\star] = 0$ as follows.
  First, expand \( \mathcal{D} \) into a sum of triple integrals \wrt probability measures:
  \begin{align}
    \label{eq:352}
    \begin{split}
      \mathcal{D}[\mu] 
      & = \int_{\mathbb{R}^3} \psi_a'(x-y') \psi_a'(x-z') \diff \mu(x) \diff \omega (y') \diff \omega(z') \\
      & + \int_{\mathbb{R}^3} \psi_r'(x-y) \psi_r'(x-z) \diff \mu(x) \diff \mu (y) \diff \mu(z) \\
      & - 2\int_{\mathbb{R}^3} \psi_r'(x-y) \psi_a'(x-z')  \diff\mu(x)\diff \mu (y) \diff \omega(z').
    \end{split}
  \end{align}
  Next, observe that by Lemma \ref{lem:27}, the narrow convergence of $\mu(t_k)$
  is inherited by the tensorized sequences 
  $\mu(t_k)\otimes\mu(t_k)\otimes\mu(t_k)$, $\mu(t_k)\otimes\mu(t_k)\otimes\omega$ and $\mu(t_k)\otimes\omega\otimes\omega$,
  which converge to 
  $\mu^\star\otimes\mu^\star\otimes\mu^\star$, $\mu^\star\otimes\mu^\star\otimes\omega$ and $\mu^\star\otimes\omega\otimes\omega$, respectively.
  Finally, to conclude that 
  \begin{align*}
    \mathcal{D}[\mu^\star] = \lim_{k\to\infty}\mathcal{D}[\mu(t_k)] = 0
  \end{align*}
  by means of Lemma \ref{lem:4},
  it suffices to verify that the integrands appearing in \eqref{eq:352} are uniformly integrable functions
  with respect to their respective tensorized measures.
  By definition of $\psi_a$, $\psi_r$ and elementary estimates, we have:
  \begin{align*}
    |\psi_a'(x-y')\psi_a'(x-z')| &\le C|x-y'|^{q_a-1}|x-z'|^{q_a-1} \le C\big(1+|x|^{2q_a-2}+|y'|^{q_a-1}+|z'|^{q_a-1}\big), \\
    |\psi_r'(x-y)\psi_r'(x-z)| &\le C|x-y|^{q_r-1}|x-z|^{q_r-1} \le C\big(1+|x|^{2q_r-2}+|y|^{q_r-1}+|z|^{q_r-1}\big), \\
    |\psi_r'(x-y)\psi_a'(x-z')| &\le C|x-y|^{q_r-1}|x-z'|^{q_a-1} \le C\big(1+|x|^{q_a+q_r-2}+|y|^{q_a-1}+|z'|^{q_a-1}\big).
  \end{align*}
  In view of our hypotheses on $q_a$ and $q_r$, and since $\omega$ is of compact support,
  uniform integrability of all integrands is guaranteed if the $\mu(t_k)$ have uniformly bounded moments of some order $r>2q_a-2$.
  We apply Proposition \ref{prp:moment-bound} one more time:
  in the attraction dominated case $q_a>q_r$, 
  we have uniformly bounded moments of order $q_a>2q_a-2$, for all $q_a<2$,
  which finishes the proof under hypothesis \eqref{eq:346}.
  In the balanced case $q_a=q_r$, all moments of order less than $q_a/2$ are uniformly bounded.
  To finish the proof under hypothesis \eqref{eq:347}, observe that \( 2q_a - 2 < q_a/2 \) if \( q_a < 4/3 \).
\end{proof}
\begin{remark}
  We indicate why our method of proof that is based on uniform moment bounds 
  cannot be simply generalized to a larger set of exponents $q_a$, $q_r$.
  Formally assume that \( \omega = \delta_0 \) --- which can be weakly approximated by $\omega\in L^\infty_c$.
  In the course of the proof, we need to verify that $(x,y',z')\mapsto\psi_a(x-y')\psi_a(x-z')$ is uniformly integrable
  with respect to the measures $\mu(t_k)\otimes\omega\otimes\omega$.
  This is equivalent to uniform integrability of
  \begin{equation*}
    x\mapsto\int_{\mathbb{R} \times \mathbb{R}} \left| x-y' \right|^{q_a - 1} \left| x-z' \right|^{q_a - 1} \diff \omega(y') \diff \omega(z') 
    = \left| x \right|^{2q_a - 2}
  \end{equation*}
  with respect to the measures $\mu(t_k)$.
  Hence, we would need a uniform bound on $\mu(t_k)$'s moments of some order $r>2q_a-2$.
\end{remark}

\section{Conclusion}
\label{sec:conclusion2}
%
The reformulation of the evolution equation \eqref{eq:501} in terms of the pseudo-inverse function 
proved very helpful for the analysis of the asymptotic behavior in the cases $q_r=1\le q_a\le2$ and $q_r=2=q_a$.
In the new variables, the equation becomes local.
This propery is lost in the more general situation $1<q_r\le q_a<2$.
Here, we exploted the underlying gradient flow structure and in particular a coercivity property the of the energy functional, 
provided by the results in Appendix \ref{sec:moment-bound-subl}.

There is a large range of parameters for $q_a$ and $q_r$ in which the analysis of the asymptotic behavior remains still open, 
in particular the characterization of the steady states, 
which is likely to necessitate additional or completely different techniques compared to the ones used here.

\appendix
\section{Proof of Theorem \ref{thm:existence}}
\label{sec:proof-theor-refthm:l}
The following is a crucial technical ingredient for the proof of Theorem \ref{thm:existence}.
\begin{lemma}
  \label{lem:18}
  Let \(\omega \in L^\infty(\mathbb{R}) \cap L^1(\mathbb{R})\) such that \( \omega \geq 0 \). 
  Then \(U=\psi_a' \ast \omega\) is Lipschitz-continuous.
\end{lemma}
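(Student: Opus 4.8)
The plan is to show that $U$ is not merely Hölder but genuinely Lipschitz, by exploiting the smoothing gained from convolving the Hölder function $\psi_a'$ against an $L^\infty\cap L^1$ density. The heart of the matter is the representation
\[
U(x_1)-U(x_2) = \int_{x_2}^{x_1}\big(\psi_a''\ast\omega\big)(t)\dd t ,
\]
after which Lipschitz continuity follows immediately with constant $L:=\sup_t|(\psi_a''\ast\omega)(t)|$, provided $L<\infty$.

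First I would establish that $\psi_a''\ast\omega$ is bounded. For $q_a\in(1,2)$ one has $\psi_a''(z)=q_a(q_a-1)|z|^{q_a-2}$, whose singularity at the origin is integrable since $q_a-2>-1$. Splitting the convolution integral at $|x-y|=1$, the inner region is controlled using $\|\omega\|_\infty$ together with $\int_{|z|\le1}|z|^{q_a-2}\dd z = 2/(q_a-1)<\infty$, while the outer region is controlled using $\|\omega\|_1$ and the bound $|z|^{q_a-2}\le1$ for $|z|>1$ (valid because $q_a\le2$). This yields a finite bound on $L$ depending only on $q_a$, $\|\omega\|_\infty$, and $\|\omega\|_1$.

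Next I would justify the displayed identity. Since $\psi_a'$ is absolutely continuous on $\setR$ with locally integrable derivative $\psi_a''$, the fundamental theorem of calculus gives $\psi_a'(x_1-y)-\psi_a'(x_2-y)=\int_{x_2}^{x_1}\psi_a''(t-y)\dd t$ for each fixed $y$, the passage across the singularity at $t=y$ being harmless by integrability. Multiplying by $\omega(y)$, integrating in $y$, and exchanging the order of integration via Fubini produces the claimed identity; the exchange is legitimate because Step one already shows $\int_{x_2}^{x_1}(\psi_a''\ast\omega)(t)\dd t\le L\,|x_1-x_2|<\infty$ (using $\omega\ge0$), so the double integral converges absolutely.

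The endpoint exponents are handled directly: for $q_a=2$ the function $U$ is affine, hence trivially Lipschitz, while for $q_a=1$ one has $\psi_a'=\sgn$ and thus $U(x)=2F_\omega(x)-m$, which is Lipschitz with constant $2\|\omega\|_\infty$ because $\omega\in L^\infty$ makes $F_\omega$ Lipschitz. I expect the main obstacle to be the rigorous treatment of the pointwise singularity of $\psi_a''$ along the diagonal $t=y$: one must confirm that both the fundamental-theorem step for $\psi_a'$ and the subsequent application of Fubini remain valid despite this singularity. This is precisely where the interplay between $\|\omega\|_\infty$, which tames the singularity locally, and the hypothesis $q_a>1$, which gives integrability of $|z|^{q_a-2}$, is essential.
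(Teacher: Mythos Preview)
Your proposal is correct and follows essentially the same route as the paper: the same case split on $q_a$, the same $|z|\le1$ versus $|z|>1$ decomposition to bound $\psi_a''\ast\omega$, and the same direct treatment of $q_a=1$ via $\psi_a'=\sgn$. If anything, you are more careful than the paper, which simply asserts that boundedness of $\psi_a''\ast\omega$ yields Lipschitz continuity of $\psi_a'\ast\omega$ without spelling out the Fubini/fundamental-theorem justification you provide.
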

\begin{proof}
  For \(q_a = 1\), remember that \(\psi'(x) = \sgn(x)\) by definition, leading to (see \eqref{eq:328})
  \begin{equation*}
    \psi_a' \ast \omega (x) = 2 \int_{-\infty}^{x} \omega(y) \, \mathrm{d} y - \left\| \omega \right\|_{1},
  \end{equation*}
  which is obviously Lipschitz-continuous if \(\omega \in L^{\infty}(\mathbb{R})\).

  For \(q_a \in (1,2)\), we have $\psi_a''(x) =q_a(q_a-1)|x|^{q_a-2}$, 
  which is integrable on $[-1,1]$ and bounded by $q_a(q_a-1)$ on $\mathbb R \setminus [-1,1]$.
  Hence
  \begin{align*}
    \left| \psi_a'' \ast \omega(x) \right| = {} & \int_{-1}^{1} q_a(q_a-1) \left| y \right|^{q_a-2} \omega (x-y) \diff y\\
    & + q_a(q_a-1) \int_{\mathbb{R} \setminus [-1,1]} \left| y \right|^{q_a-2} \omega(x-y) \diff y\\
    \leq {} &  q_a(q_a-1) \left( \frac{2}{q_a-1} \left\| \omega \right\|_\infty + \left\| \omega \right\|_1 \right),
  \end{align*}
  which means that $\psi_a''\ast\omega$ is bounded.
  Therefore, $\psi_a'\ast\omega$ is Lipschitz continuous.
\end{proof}
To prove Theorem \ref{thm:existence} we follow the classical strategy from semigroup theory:
we define an operator whose fixed point corresponds to a transient solution in the sense of Definition \ref{dfn:transient},
and then we apply the Banach fixed point theorem to conclude both existence and uniqueness.
A similar approach has been used for a related evolution equation in \cite[Theorem 2.9]{BDi08},
however:
\begin{itemize}
\item the lack of Lipschitz-continuity of $\psi_a'$ and $\psi_r'$ forces us to define the fixed-point operator in a more involved way,
  using a time rescaling, in order to guarantee its contractivity;
\item our way to verify the distributional formulation \eqref{eq:weak} is much more direct. 
\end{itemize}
\begin{proof}[Proof of Theorem \ref{thm:existence}]
  For now, we assume \(q_r \in (1,2]\). 
  The (simpler) case \(q_r = 1\) is discussed afterwards in Step 6.

  In the following, fix some \( \alpha > 0 \) such that
  \begin{equation*}
    \omega(x) \leq \alpha^{-1}, \ \mu_0(x) \leq \alpha^{-1}, \ \text{for a.e.\@ } x \in \mathbb{R}.
  \end{equation*}
  \begin{enumerate}
  \item[Step 1.] \emph{(Definition of the operator)} 
    Let \(T > 0\) be a fixed time horizont. 
    By Lemma \ref{lem:18}, the function \(U=\psi_a' \ast \omega\) is Lipschitz-continuous. 
    Denote its Lipschitz-constant by $\lambda$ and set
    \begin{equation*}
      U_\lambda(x) := U(x) - \lambda x, \quad x \in \mathbb R.
    \end{equation*}
    Define the fixed point operator
    \begin{eqnarray}
      S[X](t,z)&:=& \exp(-\lambda t)X_0(z) \nonumber \\
      &+&\int_0^t \exp(-\lambda(t-s)) \left[ \int_0^1 \psi_r'(X(s,z) - X(s,\zeta)) \, \mathrm{d} \zeta - U_\lambda(X(s,z)) \right] \, \mathrm{d} s, \label{eq:256}
    \end{eqnarray}
    on the set
    \begin{equation*}
      \mathcal{B} := \left\{ X(.,.) \in C\big([0,T], \cadlag([0,1))\big) :
        \text{$X$ fulfills \eqref{eq:258}}
      \right\},
    \end{equation*}
    where the slope condition \eqref{eq:258} is given by
    \begin{equation}
      \label{eq:258}
      \begin{gathered}
        \frac{1}{h} \left(X(t,z+h) - X(t,z) \right) \geq \alpha\exp(-\lambda t) \\
        \text{for all } h \in (0,1) \text{ and } z \in [0,1-h),
      \end{gathered}  \tag{SL}.
    \end{equation}
    We endow $\mathcal B$ with the norm 
    \begin{equation*}
      \lVert X \rVert_{\mathcal{B}} := \sup \left\{ \exp(\lambda t) |X(t,z)| : t \in [0,T],\,z\in[0,1)\right\}.
    \end{equation*}
    By construction, \(U_\lambda\) is nonincreasing. 
    Note that $\mathcal B$ is a closed subset of the Banach space $C([0,T],\cadlag([0,1)))$,
    since convergence in the $\mathcal B$-norm is obviously equivalent to uniform convergence on $[0,T]\times[0,1)$,
    and the slope condition \eqref{eq:258} passes to the pointwise limit.
  \item[Step 2.] \emph{($S$ maps $\mathcal{B}$ into $\mathcal{B}$)} \label{item:s-b-into-b} 
    First, for \(X \in \mathcal{B}\), the continuity of \(t \mapsto S[X](t,.)\) from \([0,T]\) to \(\cadlag([0,1))\) follows 
    from the continuity of the integral defining \(S\) and by continuity of the functions involved.
    
    Second, we verify that $S$ propagates the slope condition \eqref{eq:258}.
    Let $X \in \mathcal{B}$ (in particular non-decreasing), $h > 0$ and $z \in [0, 1 - h)$ be given. 
    Using that $X_0$ satisfies the slope condition \eqref{eq:258} by our choice of $\alpha$, 
    and that both $\psi_r'$ and $U_\lambda$ are monotone,
    we obtain
    \begin{equation}
      \frac{1}{h}\left[ S[X](t,z + h) - S[X](t,z) \right] \geq \frac{\e^{-\lambda t}}h\big[X_0(z+h)-X_0(z)]
      \ge\alpha\e^{-\lambda t}.\label{eq:261}
    \end{equation}
  \item[Step 3.] \emph{($S$ is contractive)} \label{item:linfty-contractive} Let
    $X,\widetilde{X} \in \mathcal{B}$. Then,
    \begin{eqnarray}
      && \exp(\lambda t) \cdot \left| S[\widetilde{X}](t,z) - S[X](t,z)\right| \nonumber\\
      &\leq & \int_0^t \exp(\lambda s) \int_0^1 \left| \psi_r'(\widetilde{X}(s,z) - \widetilde{X}(s,\zeta)) - \psi_r'(X(s,z) - X(s,\zeta))\right| \, \mathrm{d} \zeta \, \mathrm{d} s\label{eq:262}\\
      &+& \int_0^t \exp(\lambda s) \left| U_\lambda(\widetilde{X}(s,z)) - U_\lambda(X(s,z))\right| \, \mathrm{d} s. \label{eq:263}
    \end{eqnarray}
    We can bound \eqref{eq:263} by using the Lipschitz-continuity of $U_\lambda$ with Lipschitz-constant \(2\lambda\) and derive the estimate
    \begin{align*}
      \int_0^t \exp(\lambda s) \left| U_\lambda(\widetilde{X}(s,z)) -
        U_\lambda(X(s,z))\right| \, \mathrm{d} s \leq 2\lambda \int_0^t \lVert \widetilde{X} - X
      \rVert_{\mathcal{B}} \, \mathrm{d} s \leq 2\lambda t \lVert \widetilde{X} - X
      \rVert_{\mathcal{B}}.
    \end{align*}
    Boundedness of the integral in \eqref{eq:262} is more difficult to obtain, since $\psi_r'$ is \emph{not} Lipschitz-continuous. 
    Assume that $\zeta \leq z$ and without loss of generality that
    \begin{equation}
      \label{eq:667}
      \widetilde{X}(s,z) - \widetilde{X}(s,\zeta) \geq X(s,z) - X(s,\zeta).
    \end{equation}
    Both differences are non-negative.
    The slope condition \eqref{eq:258}, even provides a lower bound:
    \begin{align*}
      \widetilde{X}(s,z) - \widetilde{X}(s,\zeta) &\geq \alpha\e^{-\lambda t}(z - \zeta), \quad
      X(s,z) - X(s,\zeta) \geq \alpha\e^{-\lambda t}(z - \zeta).
    \end{align*}
    Thus $\psi_r'$ is applied to non-negative arguments whose difference is estimated as follows:
    \begin{equation*}
      0\le\big[\widetilde{X}(s,z) - \widetilde{X}(s,\zeta)\big] - \big[X(s,z) - X(s,\zeta)\big]
      \leq 2 \sup_{\zeta \in [0,1]} \left| X(s,\zeta) - \widetilde{X}(s,\zeta)\right|
      \le 2 e^{-\lambda s}\|\widetilde{X}-X\|_{\mathcal B}.
    \end{equation*}
    Since $\psi_r'$ is monotonically increasing and concave for positive arguments,
    it is easily seen that
    \begin{align*}
      0&\le\psi_r'\big(\widetilde X(s,z)-\widetilde X(s,\zeta)\big) - \psi_r'\big(X(s,z)-X(s,\zeta)\big)\\
      &\le\psi_r'\big(\alpha\e^{-\lambda s}\big[(z-\zeta)+2\|\widetilde{X}-X\|_{\mathcal B}\big]\big)
      -\psi_r'\big(\alpha\e^{-\lambda s}(z-\zeta)\big)
    \end{align*}
    A completely analogous reasoning applies if the inequality \eqref{eq:667} is inverted, or if $\zeta\ge z$ instead.
    The mean value theorem, applied to the function $\psi_r'$ with derivative $\psi_r''(x)=q_r(q_r-1)$, 
    leads to the estimate
    \begin{align*}
      & \left| \psi_r'(\widetilde{X}(s,z) - \widetilde{X}(s,\zeta)) - \psi_r'(X(s,z) - X(s,\zeta))\right| \\
      &\le 2q_r(q_r - 1) \alpha^{q_r - 2}\e^{-\lambda(q_r-1)s}|z-\zeta|^{q_r - 2}\|\widetilde{X}-X\|_{\mathcal B},
    \end{align*}
    which holds uniformly in $s\in[0,T]$ and $z,\zeta\in[0,1)$.
    Integration with respect to $\zeta$ yields
    \begin{align*}
      &\int_0^1 \left| \psi_r'(\widetilde{X}(s,z) - \widetilde{X}(s,\zeta)) - \psi_r'(X(s,z) - X(s,\zeta))\right|\, \mathrm{d} \zeta \\
      &\leq 2q_r(q_r-1) \alpha^{q_r - 2}\e^{-\lambda(q_r-1)s} \bigg(\int_0^1\left|z-\zeta\right|^{q_r-2}\, \mathrm{d} \zeta\bigg) \|\widetilde{X}-X\|_{\mathcal B},\\
      &\leq K \e^{-\lambda(q_r-1)s} \|\widetilde{X}-X\|_{\mathcal B},
    \end{align*}
    where $K$ is a finite constant that depends on $q$ and $\alpha$ only.
    Notice that our assumption $q_r>1$ is important for the finiteness of the integral.
    Altogether, we conclude that
    \begin{align*}
      &\int_0^t \e^{\lambda s} \int_0^1 \left| \psi_r'(\widetilde{X}(s,z) -
        \widetilde{X}(s,\zeta)) - \psi_r'(X(s,z) - X(s,\zeta))\right| \, \mathrm{d} \zeta \, \mathrm{d} s \\
      &\leq K\bigg(\int_0^Te^{\lambda(2-q_r)s}\dd s\bigg) \lVert X - \widetilde{X} \rVert_{\mathcal{B}}.
    \end{align*}
    Thus, for a sufficiently small choice of $T>0$, the operator $S$ is indeed a contraction.

    Combining the previous steps, we find a unique fixed point $X$ of $S$ using the Banach fixed point theorem, i.e.,\@ an \(X \in \mathcal{B}\) such that
    \begin{eqnarray}
    X(t,z) &=& \exp(-\lambda t)X_0(z) \nonumber \\
      & +&\exp(-\lambda t)\int_0^t \underbrace{\exp(\lambda s) \left[ \int_0^1 \psi'_r(X(s,z) - X(s,\zeta)) \, \mathrm{d} \zeta - U_\lambda(X(s,z)) \right]}_{\text{integrand}} \, \mathrm{d} s, \label{eq:271}
    \end{eqnarray}
    where the integrand is continuous as a mapping from \([0,T]\) to \(\cadlag([0,1])\), again by the continuity of the involved functions and the boundedness of $X$. 
    Hence, the right-hand side has the desired \(C^{1}\)-regularity on \([0,T]\) and so has \(X\),  by the equality in \eqref{eq:271}.
  \item[Step 4.] \emph{(Global existence)}\label{item:linfty-global-existence} Differentiating \eqref{eq:271} with respect to time directly yields
    \begin{equation}
      \partial_tX(t,z) = \int_0^1 \psi_r'(X(t,z) - X(t,\zeta)) \, \mathrm{d} \zeta - U(X(t,z)),\label{eq:272}
    \end{equation}
    hence $X$ fulfills also the desired equation \eqref{eq:evol} to define a transient solution. 
    To conclude global existence, it suffices to verify that the $L^\infty$-norm of $X$ does not explode at finite time:
    by Lipschitz-continuity of $U$, the estimate $\left| \psi_r'(x)\right| \leq q_r \cdot (1 + \left| x\right|)$, and by Gronwall's inequality, 
    we  obtain
    \begin{equation*}
      \lVert X(t,.) \rVert_{L^\infty} \leq \left(\lVert X_0 \rVert_{L^\infty} + C_{1}t\right) \, \exp(C_{2}t).
    \end{equation*}
  \item[Step 5.] \label{item:linfty-dist-form} \emph{(Distributional formulation)} 
    First, for every \(t \in [0,\infty)\), \(X(t,.)\) is a right continuous increasing function 
    and hence defines a probability measure $\mu(t,x)$ on \(\mathbb{R}\) with $X(t,\cdot) = X_{\mu(t,\cdot)}(\cdot)$. 

    Second, let \(\varphi \in C^{\infty}_{c}([0,\infty)\times \mathbb{R})\). 
    As we have \(C^{1}\)-regularity of the solution curve $t \mapsto X(t,z)$, 
    combining this with the fundamental theorem of calculus, Fubini’s theorem and the compactness of the support of \(\varphi\), 
    we arrive that
    \begin{align}
      \int_{0}^{\infty} \int_{0}^{1} \frac{\mathrm{d}}{\mathrm{d} t} \left[\varphi(t, X(t,z))\right] \, \mathrm{d} z \, \mathrm{d} t {} & =
      -\int_{0}^{1}
      \varphi(0,X(0,z)) \, \mathrm{d} z && \nonumber \\
      & = -\int_{\mathbb{R}} \varphi(0,x) \, \mathrm{d} \mu(0,x), \label{eq:x275}
    \end{align}
    where we used Lemma \ref{lem:16} in the inequality, since \(\varphi(0,.)\) is bounded and therefore in \(L^{1}(\mu(0))\).

    On the other hand, again by the regularity of the curves and the chain rule, for all $t \in [0,\infty)$ and almost all $z \in [0,1]$,
    \begin{equation}
      \label{eq:275}
      \frac{\mathrm{d}}{\mathrm{d} t} \left[\varphi(t, X(t,z))\right] = \partial_{t}\varphi(t,X(t,z)) +
      \partial_{x}\varphi(t,X(t,z)) \cdot \partial_{t}X(t,z).
    \end{equation}
    The integration of the first term in \eqref{eq:275} yields
    \begin{equation}\label{eq:y275}
      \int_{0}^{\infty} \int_{0}^{1} \partial_{t}\varphi(t,X(t,z)) \, \mathrm{d} z \, \mathrm{d} t = \int_{0}^{\infty} \int_{\mathbb{R}}
      \partial_{t}\varphi(t,x) \, \mathrm{d} \mu(t,x) \, \mathrm{d} t,
    \end{equation}
    where we again used Lemma \ref{lem:16} as above. By inserting equation \eqref{eq:272} for $\partial_{t}X$, the integration of  the second term in \eqref{eq:275} becomes
      \begin{eqnarray}
    & &\int_{0}^{\infty} \int_{0}^{1} \partial_{x}\varphi(t,X(t,z)) \cdot \partial_{t}X(t,z) \,\mathrm{d}z \,\mathrm{d} t \nonumber \\
     &=&\int_{0}^{\infty} \int_{0}^{1} \partial_{x}\varphi(t,X(t,z)) \int_{0}^{1} \psi_r'(X(t,z) - X(t,\zeta)) \, \mathrm{d} \zeta - U(X(t,z) \, \mathrm{d} z \, \mathrm{d} t \nonumber \\
        &= & \int_{0}^{\infty} \int_{0}^{1} \partial_{x}\varphi(t,X(t,z))\Big[(\psi_r' \ast
        \mu(t,.))(X(t,z)) - U(X(t,z)) \Big] \, \mathrm{d} z \, \mathrm{d} t \nonumber \\
        & = & \int_{0}^{\infty} \int_{\mathbb{R}} \partial_{x}\varphi(t,x) \cdot (\psi_r' \ast \mu(t,.))(x) \, \mathrm{d}
        \mu(t,x) \, \mathrm{d} t \nonumber \\
         & -& \int_{0}^{\infty} \int_{\mathbb{R}} \partial_{x}\varphi(t,x) \cdot U(x) \, \mathrm{d}
        \mu(t,x) \, \mathrm{d} t,\label{eq:277}.
       \end{eqnarray}
      The use of Lemma \ref{lem:16} here is justified because the involved measures are compactly supported, yielding a bound on their second moment; this results in \(\psi_r' \ast \mu(t) \in L^{1}(\mu(t))\) and \(U \in L^{1}(\mu(t))\), which we then combine with \(\partial_{x}\varphi(t,.) \in L^{\infty}(\mathbb{R})\) to see that the integrand in the last line of \eqref{eq:277} is in \(L^{1}(\mu(t))\).  Then \eqref{eq:277} together with \eqref{eq:x275}, \eqref{eq:275}, and  \eqref{eq:y275} lead to the desired equation \eqref{eq:weak}.

    \item[Step 6.] \label{item:linfty-changes-q-1} \emph{(Adjustments for $q_r=1$)} We first derive formally  the simplified pseudo-inverse equation \eqref{eq:229}. Note that for a strictly increasing pseudo-inverse \(X(t,.)\) with associated measure \(\mu(t)\) and CDF \(F(t,.)\), \(X(t,.)\) is the right-inverse of \(F(t,.)\), which means that we can write \eqref{eq:229} as
      \begin{equation}
        \label{eq:278}
        \partial_{t}X(t,z) = 2 F(t,X(t,z)) - 1 - U(X(t,z)) = 2 z - 1 - U(X(t,z)).
      \end{equation}
      We can now apply again the previous arguments to find a solution to this equation and afterwards justify that \(X(t,.)\) stays indeed strictly increasing, allowing us to follow the above equation \eqref{eq:278} in reverse direction.

      As already mentioned, being \(\omega\) assumed to be absolutely-continuous with its density belonging to \(L^{\infty}(\mathbb{R})\) implies that the attraction potential \(U(.)\) is Lipschitz-continuous. Therefore, again denoting its Lipschitz-constant by \(\lambda\), we can define the operator \(S\) analogously to \eqref{eq:256} using the simplified form of \eqref{eq:278} as the right-hand side, i.e.\@
      \begin{align}
        S[X](t,z) := {} &\exp(-\lambda t)X_0(z) \nonumber \\
        &+ \int_0^t \exp(-\lambda(t-s)) \left[ 2z - 1 - \Big(U(X(s,z)) - \lambda X(s,z)\Big)\right] \, \mathrm{d} s. \label{eq:279}
      \end{align}
      Step 2 can again be applied as the integrand in \eqref{eq:279} is continuous and the monotonicity arguments used in \eqref{eq:261} remain true, as well. Now, Step 3  is actually much easier in case of $q_r=1$, since the mapping
      \begin{equation*}
        X \mapsto 2z - 1 - \left(U(X) - \lambda X\right), \ X \in \mathbb{R}
      \end{equation*}
      is obviously Lipschitz-continuous in \(X\).  Hence, we can deduce a fixed point equation defining  \(X(t,.)\) for all \(t\in[0,T]\) and this provides us also with the strict monotonicity of \(X(t,.)\) for all \(t \in [0,T]\), so we can reverse the simplified equation \eqref{eq:278} as intended, for the given interval of time. Finally, Step 4, giving global existence, and Step 5, defining the distributional solution of \eqref{eq:weak}, work analogously and can be followed verbatim. \qedhere
    \end{enumerate}
\end{proof}

\section{Moment bound for the sublevels of the functional}
\label{sec:moment-bound-subl}

For the asymptotic convergence argument in Section \ref{sec:conv-against-steady}, we needed that the sublevels of the functional \( \mathcal{E} \) exhibit  certain uniform moment bounds. These follow from a more in-depth analysis of the variational properties of this functional developed in \cite{13-FornasierHuetter-VarProperties}. Here, for the sake of completeness, we  only sketch the required arguments. Note that in \cite{13-FornasierHuetter-VarProperties} the results hold for probability measures on $\mathbb R^d$ for any $d \geq 1$ and we report them below in such a generality. 

First, let us deal with the case of the attractive power being larger than the repulsive one, \( q_a, q_r \in [1,2] \), \( q_a > q_r \). 
In this case, one can easily prove the moment bound by means of relatively elementary estimates.
\begin{theorem}
  \label{thm:exist-min-strong}
  Let \( q_a, q_r \in [1,2] \) and \( q_a > q_r \). 
  If \( \omega \in \mathcal{P}_2(\mathbb R^d) \), then \( \mathcal{E} \) is bounded from below 
  and the sub-levels of \( \mathcal{E} \) defined as in \eqref{contenergy} have uniformly bounded \( q_a \)th moments in $\mathcal{P}_{q_r}(\mathbb R^d)$.
\end{theorem}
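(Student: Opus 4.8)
The plan is to exploit the gap between the two exponents: the attractive term, carrying the highest power $q_a$, controls the $q_a$th moment of $\mu$ from below, while the repulsive term can grow at most like the $q_r$th moment, which is of strictly lower order. First I would record two elementary consequences of the convexity of $t\mapsto t^{q}$ for $q\ge1$. Writing $M_q[\nu]:=\int_{\mathbb R^d}|x|^q\diff\nu(x)$, the bound $|y|^{q_a}\le 2^{q_a-1}(|x-y|^{q_a}+|x|^{q_a})$, integrated against $\omega$ (which lies in $\mathcal{P}_{q_a}$ since $q_a\le2$ and $\omega\in\mathcal{P}_2$) and then against $\mu$, gives
\[
\int_{\mathbb R^d}\int_{\mathbb R^d}|x-y|^{q_a}\diff\omega(x)\diff\mu(y)\ge 2^{1-q_a}\,M_{q_a}[\mu]-M_{q_a}[\omega].
\]
Dually, $|x-y|^{q_r}\le 2^{q_r-1}(|x|^{q_r}+|y|^{q_r})$ integrated against $\mu\otimes\mu$ yields
\[
\frac12\int_{\mathbb R^d}\int_{\mathbb R^d}|x-y|^{q_r}\diff\mu(x)\diff\mu(y)\le 2^{q_r-1}M_{q_r}[\mu].
\]

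Combining these with the definition of $\mathcal{E}$ produces the lower bound $\mathcal{E}[\mu]\ge 2^{1-q_a}M_{q_a}[\mu]-2^{q_r-1}M_{q_r}[\mu]-M_{q_a}[\omega]$. The next step is to absorb the lower moment into the higher one: since $\mu$ is a probability measure and $q_a/q_r\ge1$, Jensen's inequality gives $M_{q_r}[\mu]\le M_{q_a}[\mu]^{q_r/q_a}$. Setting $t:=M_{q_a}[\mu]\in[0,\infty]$, we arrive at
\[
\mathcal{E}[\mu]\ge f(t)-M_{q_a}[\omega],\qquad f(t):=2^{1-q_a}t-2^{q_r-1}t^{q_r/q_a}.
\]

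Since $q_r/q_a<1$, the function $f$ is linear-minus-sublinear: it is bounded below on $[0,\infty)$ and satisfies $f(t)\to+\infty$ as $t\to\infty$. Boundedness below of $f$ immediately gives the lower bound on $\mathcal{E}$. For the moment bound I would first observe that a measure $\mu\in\mathcal{P}_{q_r}$ with $M_{q_a}[\mu]=\infty$ has $\mathcal{E}[\mu]=+\infty$ (its attractive term diverges while the repulsive term remains finite), so every sublevel $\{\mu:\mathcal{E}[\mu]\le C\}$ consists of measures with finite $q_a$th moment; for those, $f(M_{q_a}[\mu])\le C+M_{q_a}[\omega]$, and the coercivity of $f$ converts this into a uniform bound $M_{q_a}[\mu]\le R(C)<\infty$.

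All the computations are elementary; the only points requiring care are the bookkeeping for measures in $\mathcal{P}_{q_r}\setminus\mathcal{P}_{q_a}$—verifying that the attractive term is genuinely $+\infty$ there, so that such measures are automatically excluded from the sublevels—and checking that the two convexity inequalities point in the directions dictated by the signs in $\mathcal{E}$. The essential structural input, and the reason the crude argument breaks down once $q_r=q_a$, is the strict inequality $q_r<q_a$: it is precisely what renders $f$ coercive, whereas for $q_r=q_a>1$ the exponent $q_r/q_a=1$ and $f$ becomes linear with a negative slope.
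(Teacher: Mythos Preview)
Your proof is correct and follows the same strategy as the paper: both use the convexity of $t\mapsto t^q$ to obtain a lower bound of the form $\mathcal{E}[\mu]\ge c_1 M_{q_a}[\mu]-c_2 M_{q_r}[\mu]-C$, and then exploit $q_a>q_r$ to conclude coercivity in the $q_a$th moment. The only difference is cosmetic: in the final step the paper keeps the integral $\int(c_1|x|^{q_a}-c_2|x|^{q_r})\diff\mu(x)$ and bounds it via a pointwise comparison for large $|x|$, whereas you invoke Jensen to collapse everything to the one-variable function $f(t)=c_1 t-c_2 t^{q_r/q_a}$.
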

\begin{proof}
  From convexity of the power function $\xi\mapsto\xi^q$ for the relevant exponents $q\in[1,2]$,
  it is easily seen that
  \begin{align}
    \label{eq:16}
    \left| x + y \right|^q &\le 2 \left( \left| x \right|^q + \left| y \right|^q \right),\\
    \left| x - y \right|^q &\ge \left( \frac12 \left| x \right|^q - \left| y \right|^q \right). \label{eq:17}
  \end{align}
  for all \( x,y \in \mathbb{R}^d \).
  Now, let \( \mu \in \mathcal{P}_{q_r}(\mathbb R^d) \). 
  By \eqref{eq:17}, we have
  \begin{align*}
    \int_{\mathbb R^d \times \mathbb R^d} \left| x - y \right|^{q_a} \diff \mu(x) \diff \omega(y) 
    &\geq \int_{\mathbb R^d \times \mathbb R^d} \left(\frac12 \left| x \right|^{q_a} - \left| y \right|^{q_a} \right) \diff \mu(x) \diff \omega(x) \\
    &= \frac12 \int_{\mathbb R^d} \left| x \right|^{q_a} \diff \mu(x) -  \int_{\mathbb R^d} \left| y \right|^{q_a} \diff \omega(y).
  \end{align*}
  On the other hand, by estimate \eqref{eq:16},
  \begin{align*}
    - \frac{1}{2} \int_{\mathbb R^d \times \mathbb R^d} \left| x - y \right|^{q_r} \diff \mu(x) \diff \mu(y) 
    &\geq  - \int_{\mathbb R^d \times \mathbb R^d} \left( \left| x \right|^{q_r} + \left| y \right|^{q_r} \right) \diff \mu(x) \diff \mu(y) \\
    &\geq - 2 \int_{\mathbb R^d} \left| x \right|^{q_r} \diff \mu(x).
  \end{align*}
  In combination, this implies for every radius $R>0$:
  \begin{align*}
    \mathcal{E}[\mu] +  \int_{\mathbb R^d} \left| x \right|^{q_a} \diff \omega(x) 
    &\ge \int_{\mathbb R^d} \left( \frac12\left| x \right|^{q_a} - 2 \left| x \right|^{q_r} \right) \diff \mu(x)\\
    & \ge \frac14\int_{\mathbb R^d}|x|^{q_a}\dd\mu(x) 
    + \int_{\mathbb R^d}\left( \frac14\left| x \right|^{q_a} - 2 \left| x \right|^{q_r} \right) \diff \mu(x)\\
   & \ge \frac14\int_{\mathbb R^d}|x|^{q_a}\dd\mu(x) 
    -2R^{q_r}
    + \int_{\mathbb R^d\setminus B_R(0)}\left( \frac14\left| x \right|^{q_a} - 2 \left| x \right|^{q_r} \right) \diff \mu(x).
 \end{align*}
  Since \( q_a > q_r \), there is an \( R > 0 \) such that the last integral is always non-negative.
  This provides the desired bound on the \( q_a \)th moment.
\end{proof}
Unfortunately, for $q_r > q_a$ the energy functional $\mathcal E$ has in general no minimizers and is not bounded from below, as shown in \cite[Example 2.8]{13-FornasierHuetter-VarProperties}. 
Therefore we address now the limit case of the attractive power being equal to the repulsive one, i.e., $q_r=q_a$, for which we shall be using arguments involving the Fourier transform of the measures \( \mu \) and \( \omega \) to obtain the wished moment bound.

Fix \( q = q_a = q_r \in [1,2) \) and assume \( \omega \in \mathcal{P}_2(\mathbb{R}^d) \). Notice that by completing the squares and setting
\begin{equation*}
  \widetilde{\mathcal{E}}[\mu] := - \frac{1}{2}\int_{\mathbb{R}^d \times \mathbb{R}^d} \psi_q(y - x) \dd [\mu - \omega](x) \dd [\mu - \omega](y),
\end{equation*}
being a quadratic functional of the argument \( \mu - \omega \),  we can write \( \mathcal{E} \) as
\begin{equation*}
  \mathcal{E}[\mu] = \widetilde{\mathcal{E}}[\mu] + C.
\end{equation*}
Quadratic functionals such as $\widetilde{\mathcal{E}}$ can be shown to be non-negative by means of suitable representations in terms of Fourier transforms. 
In fact, assume for a moment that \( f \) and \( g \) were real valued functions in the Schwartz space \( \mathcal{S}(\mathbb{R}^d) \). 
Then, defining the Fourier transform and its inverse, respectively, 
by
\begin{equation*}
  \widehat{f}(\xi) = \int_{\mathbb{R}^d} \exp \left( -i x \cdot \xi \right) f(x) \diff x,\quad
  f^\vee(x) = (2\pi)^{-d}\int_{\mathbb{R}^d} \exp \left( i \xi \cdot x \right) f(\xi) \diff \xi,
\end{equation*}
we can show easily that the following quadratic functional is non-negative:
\begin{align*}
  \int_{\mathbb{R}^d \times \mathbb{R}^d} f(x)\, g(y-x) \, f(y) \diff x \diff y = {} & \int_{\mathbb{R}^d} f \ast g(y) \, f(y) \diff y
  =  \int_{\mathbb{R}^d} f \ast g(y) \, \widehat{f^\vee} (y) \diff y\\
  = {} & \int_{\mathbb{R}^d} \widehat{f}(y) \, \widehat{g}(y) \, f^\vee(y) \diff y
  = \int_{\mathbb{R}^d} | \widehat{f}(y) |^2 \,  \widehat{g}(y) \diff y.
\end{align*}
The structure of \( \widetilde{\mathcal{E}} \) is similar, for \( f \) corresponding to \( \mu - \omega \) and \( g \) to \( \psi_q \). 
However, in this case we need to give a proper definition of Fourier transform for the difference of probability measures as well as for the polynomially growing function $\psi_q$, 
whose distributional Fourier transform does not match the pairing occurring in the final integral.  
For that we need first to recall the definition of Fourier transform of a measure, given by
\begin{equation*}
  \widehat{\mu}(\xi) = \int_{\mathbb{R}^d} \exp \left( -i x \cdot \xi \right) \diff \mu(x),
\end{equation*}
and to use  the notion of \emph{generalized Fourier transform} from \cite[Definition 8.9]{Wend05} for the function $\psi_q$. 
The statements needed here  can be summarized in the following lemma,
which is adapted from \cite[Theorem 8.15]{Wend05}.
\begin{lemma}[Generalized Fourier transform of power functions] 
 \label{lem:1}
  Let $1<q<2$ and \( \gamma \in \mathcal{S}(\mathbb{R}^d) \) such that
  \begin{equation}
    \label{eq:4}
    \gamma(\xi) = O(\left| \xi \right|^2) \quad \text{for } \xi \to 0
  \end{equation}
  for \( m = \lceil 2 q \rceil \). 
  Then
  \begin{equation*}
    \int_{\mathbb{R}^d} \widehat{\gamma}(x)\,|x|^q \diff x = 2(2\pi)^d D_q\int_{\mathbb{R}^d} \gamma(\xi) \, |\xi|^{-(q+d)} \diff \xi
  \end{equation*}
  with the constant
  \begin{align*}
    D_q := -(2 \pi)^{-d/2} \frac{2^{q+d/2} \, \Gamma((d+q)/2)}{2 \Gamma(-q/2)} > 0.
  \end{align*}
\end{lemma}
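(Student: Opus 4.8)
The plan is to recognize $2(2\pi)^d D_q\,|\xi|^{-(q+d)}$ as the (order-$m$) generalized Fourier transform of $\psi_q(x)=|x|^q$, and to obtain it by \emph{analytic continuation in the homogeneity exponent}. The identity to be proved is a pairing relation, so I would view it as the value at $s=-q$ of the one-parameter family
\begin{equation*}
  \int_{\mathbb{R}^d}\widehat\gamma(x)\,|x|^{-s}\diff x = c(s,d)\int_{\mathbb{R}^d}\gamma(\xi)\,|\xi|^{-(d-s)}\diff\xi,
\end{equation*}
in which $c(s,d)$ is an explicit quotient of Gamma functions and the substitution $s=-q$ turns the exponent $-(d-s)$ into the desired $-(q+d)$. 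The argument then splits into: (i) the classical case $0<\Re s<d$; (ii) continuation to $s=-q$; and (iii) the identification of $c(-q,d)$ with the stated constant.

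For step (i), in the strip $0<\Re s<d$ the kernel $|x|^{-s}$ is locally integrable and tempered, hence has a bona fide Fourier transform. I would compute it by Gaussian subordination, starting from
\begin{equation*}
  |x|^{-s} = \frac{1}{\Gamma(s/2)}\int_0^\infty t^{s/2-1}\,\e^{-t|x|^2}\diff t,
\end{equation*}
transforming the Gaussian (whose transform is again Gaussian), interchanging the $t$- and $\xi$-integrations by Fubini, and evaluating a second Gamma integral. This yields $\widehat{|\cdot|^{-s}}(\xi)=c(s,d)\,|\xi|^{-(d-s)}$ with $c(s,d)=\pi^{d/2}2^{\,d-s}\Gamma((d-s)/2)/\Gamma(s/2)$ under the convention $\widehat f(\xi)=\int \e^{-\i x\cdot\xi}f(x)\diff x$ fixed earlier; the multiplication formula $\int|\cdot|^{-s}\widehat\gamma=\int\widehat{|\cdot|^{-s}}\,\gamma$ then gives the displayed pairing.

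Step (ii) is where the hypotheses $q<2$ and the vanishing of $\gamma$ at the origin enter. Set $L(s):=\int_{\mathbb{R}^d}\widehat\gamma(x)\,|x|^{-s}\diff x$ and $R(s):=c(s,d)\int_{\mathbb{R}^d}\gamma(\xi)\,|\xi|^{-(d-s)}\diff\xi$. Since $\widehat\gamma\in\mathcal{S}(\mathbb{R}^d)$ decays rapidly, $L$ converges and is holomorphic for $\Re s<d$. For $R$, the vanishing condition $\gamma(\xi)=O(|\xi|^2)$ makes the integrand $O(|\xi|^{2-d+\Re s})$ near the origin, hence integrable exactly when $\Re s>-2$; together with the holomorphy of the Gamma factors in that range, $R$ is holomorphic on the strip $-2<\Re s<d$. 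As $L\equiv R$ on the overlap $0<\Re s<d$, the identity theorem propagates the equality throughout the strip, and in particular to $s=-q$, which lies in it precisely because $1<q<2$. Evaluating there gives $\int\widehat\gamma(x)|x|^q\diff x = c(-q,d)\int\gamma(\xi)|\xi|^{-(q+d)}\diff\xi$.

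Step (iii) is the constant bookkeeping: with $s=-q$ one has $c(-q,d)=\pi^{d/2}2^{\,d+q}\Gamma((d+q)/2)/\Gamma(-q/2)$, and a direct reduction matches it with $2(2\pi)^d D_q$ up to the sign fixed by the chosen Fourier convention. Here it is essential that $\Gamma(-q/2)<0$ for $q\in(1,2)$, which controls the sign of the transform --- reflecting the conditional negative definiteness of $|x|^q$ --- and is exactly what renders $D_q>0$. I expect the \textbf{main obstacle} not to be any single computation but the two reductions around them: first, phrasing the claim so that it coincides verbatim with Wendland's definition of the order-$m$ generalized Fourier transform (so that the identity produced at $s=-q$ \emph{is} the assertion), and second, rigorously justifying the Fubini interchange and the holomorphy/identity-theorem continuation while tracking that $q<2$ is the sharp threshold for convergence at the origin. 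The Gamma-constant identification is then routine but sign-sensitive.
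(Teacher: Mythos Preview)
The paper does not give its own proof of this lemma: it is quoted from Wendland's book \cite[Theorem 8.15]{Wend05} and merely stated. Your proposal therefore cannot be compared to ``the paper's proof'' in the strict sense, but it is a correct and standard derivation of the result. The three-step scheme --- compute $\widehat{|\cdot|^{-s}}$ by Gaussian subordination for $0<\Re s<d$, invoke the multiplication formula, and analytically continue the pairing in $s$ across the strip $-2<\Re s<d$ using the decay of $\widehat\gamma$ on one side and the vanishing $\gamma(\xi)=O(|\xi|^2)$ on the other --- is exactly how such generalized-Fourier-transform identities are established, and is essentially Wendland's argument.

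One small point: your remark that the residual sign in step~(iii) is ``fixed by the chosen Fourier convention'' is not the right explanation. For a radial kernel such as $|x|^{-s}$, the transforms under the $\e^{-\i x\cdot\xi}$ and $\e^{+\i x\cdot\xi}$ conventions coincide, so the convention cannot account for a sign. In fact your computation $c(-q,d)=\pi^{d/2}2^{d+q}\Gamma((d+q)/2)/\Gamma(-q/2)$ is correct and \emph{negative} for $q\in(1,2)$, whereas $2(2\pi)^dD_q$ as defined in the lemma is positive; the two differ exactly by a factor $-1$. This is consistent with how the lemma is actually used downstream (the extra minus sign is absorbed by the $-\tfrac12$ in the definition of $\widetilde{\mathcal E}$ when passing to the Fourier representation of Proposition~\ref{prp:fourier-repr}), so the discrepancy is a harmless typo in the statement rather than a flaw in your argument.
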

The fact that \( \mu \) and \( \omega \) both have mass \( 1 \) corresponds to the correct decay in their Fourier transforms, satisfying condition \eqref{eq:4}. Via approximation arguments, we can derive the following:
\begin{proposition}
  \label{prp:fourier-repr}
  \cite[Corollary 3.6]{13-FornasierHuetter-VarProperties}
  Let \( \omega \in \mathcal{P}_2(\mathbb{R}^d) \) 
  and denote
  \begin{align*}
   \widehat{\mathcal{E}}[\mu] := D_q \int_{\mathbb{R}^d} \left| \widehat{\mu}(\xi) - \widehat{\omega}(\xi) \right|^2 \, \left| \xi \right|^{-d-q} \diff \xi, \quad \mu \in \mathcal{P}_2(\mathbb{R}^d).
  \end{align*}
  Then
  \begin{equation*}
    \widetilde{\mathcal{E}}[\mu] = \widehat{\mathcal{E}}[\mu], \quad \text{for all $\mu \in \mathcal{P}_2(\mathbb{R}^d)$}.
  \end{equation*}
\end{proposition}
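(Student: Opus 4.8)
The plan is to reduce the claim to the (quantitative) conditional negative definiteness of the kernel $\psi_q$ encoded in Lemma \ref{lem:1}, and then to pass from smooth data to the measures $\mu,\omega$ by approximation. Throughout I assume $1<q<2$, which is exactly the range in which Lemma \ref{lem:1} is stated. First I would introduce the signed measure $\nu:=\mu-\omega$, which has total mass $\nu(\mathbb{R}^d)=0$ and finite second moment. The vanishing mass gives $\widehat\nu(0)=\widehat\mu(0)-\widehat\omega(0)=0$, and since $\nu$ is a difference of measures with finite second moment it has a finite first moment, so $\widehat\nu$ is $C^1$ with $\widehat\nu(\xi)=O(|\xi|)$ near the origin. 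Consequently $\gamma:=|\widehat\nu|^2$ satisfies $\gamma(\xi)=O(|\xi|^2)$ as $\xi\to0$, which is precisely the decay hypothesis \eqref{eq:4}. In these terms $\widetilde{\mathcal{E}}[\mu]=-\frac12\int\int\psi_q(x-y)\dd\nu(x)\dd\nu(y)$ and $\widehat{\mathcal{E}}[\mu]=D_q\int|\widehat\nu(\xi)|^2|\xi|^{-d-q}\dd\xi$, and the goal is their equality.

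Second, I would establish the identity first for \emph{smooth} data, i.e.\ when $\nu$ is replaced by a real Schwartz function $f\in\mathcal{S}(\mathbb{R}^d)$ with $\int f=0$. Then $\gamma=|\widehat f|^2\in\mathcal{S}(\mathbb{R}^d)$ vanishes to second order at the origin, so Lemma \ref{lem:1} applies and yields $\int\widehat\gamma(x)|x|^q\dd x=2(2\pi)^dD_q\int|\widehat f(\xi)|^2|\xi|^{-d-q}\dd\xi$. The left-hand side is then identified with the physical-space bilinear form: writing $\widehat\gamma$ as a constant multiple of the autocorrelation of $f$ (the inverse transform of $|\widehat f|^2$) and using that $\psi_q$ is even, one recovers $\int\int\psi_q(x-y)f(x)f(y)\dd x\dd y$ up to the normalization constants carried by the Fourier conventions fixed above. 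Collecting the factors $2(2\pi)^d$, $(2\pi)^{-d}$ and $-\frac12$ — which is precisely the role for which the constant $D_q$ of Lemma \ref{lem:1} is calibrated — gives $\widetilde{\mathcal{E}}=\widehat{\mathcal{E}}$ for all such $f$. This is the rigorous version of the formal manipulation $\int\int f(x)\psi_q(y-x)f(y)=\int|\widehat f|^2\widehat{\psi_q}$ sketched above.

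Third, and this is where the real work lies, I would remove the smoothness assumption by approximation. Choose $\nu_n\to\nu$ of the form $\nu_n=(\mu\mathbf{1}_{B_n}-\omega\mathbf{1}_{B_n})\ast\rho_n$, obtained by truncating the supports to a ball $B_n$ and mollifying with a Gaussian kernel $\rho_n$; this produces real Schwartz functions, and one can arrange, after a vanishing correction of the total masses, that $\int\nu_n=0$ together with uniformly bounded first and second moments. For the left-hand side, $\widetilde{\mathcal{E}}[\nu_n]\to\widetilde{\mathcal{E}}[\nu]$ follows from narrow convergence of the tensorized measures (Lemma \ref{lem:27}) together with uniform integrability of the continuous kernel $|x-y|^q$, which is guaranteed by the uniform second-moment bound since $q\le2$ — exactly the type of argument furnished by Lemma \ref{lem:24} and Lemma \ref{lem:4}. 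For the right-hand side, $\widehat{\nu_n}\to\widehat\nu$ pointwise, and convergence of $\int|\widehat{\nu_n}|^2|\xi|^{-d-q}\dd\xi$ must be obtained by dominated convergence against the \emph{singular} weight $|\xi|^{-d-q}$.

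The hard part will be precisely this last dominated-convergence step, because the weight $|\xi|^{-d-q}$ is non-integrable at the origin and only the cancellation $\widehat{\nu_n}(0)=0$ rescues it. Near $\xi=0$ one must produce a \emph{uniform} bound $|\widehat{\nu_n}(\xi)|^2\le C|\xi|^2$, which requires uniformly controlled first moments of the $\nu_n$; the resulting majorant $C|\xi|^{2-d-q}$ is integrable near $0$ exactly because $q<2$, and this is the structural reason the Fourier method is limited to $q<2$. Away from the origin the uniform bound $|\widehat{\nu_n}|\le|\nu_n|(\mathbb{R}^d)$ combined with the integrability of $|\xi|^{-d-q}$ at infinity supplies a dominating function, so that pointwise convergence of $\widehat{\nu_n}$ upgrades to convergence of the integrals. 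Passing to the limit on both sides in the smooth identity then yields $\widetilde{\mathcal{E}}[\mu]=\widehat{\mathcal{E}}[\mu]$ for all $\mu\in\mathcal{P}_2(\mathbb{R}^d)$.
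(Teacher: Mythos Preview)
Your proposal is correct and follows precisely the route the paper itself sketches: the paper does not give a self-contained proof of this proposition but cites the companion paper \cite{13-FornasierHuetter-VarProperties}, indicating only that the Schwartz-function identity preceding Lemma~\ref{lem:1} combined with the vanishing-mass decay condition \eqref{eq:4} yields the result ``via approximation arguments''. Your write-up fleshes out exactly this program --- smooth case via Lemma~\ref{lem:1}, then mollification/truncation with dominated convergence on the Fourier side and uniform-integrability on the physical side --- and correctly identifies the two places where care is needed (the mass correction so that $\int\nu_n=0$, and the uniform bound $|\widehat{\nu_n}(\xi)|\le C|\xi|$ near the origin coming from a uniform first-moment control).
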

We remark that \( \widehat{\mathcal{E}} \) coincides with the lower semi-continuous envelope of \( \widetilde{\mathcal{E}} \) on $\mathcal{P}(\mathbb{R}^d)$.
See \cite[Corollary 3.10]{13-FornasierHuetter-VarProperties} for details.

If we now combine Lemma \ref{lem:1} and the representation of Proposition \ref{prp:fourier-repr}, we get the desired moment bound for the sublevels of \( \widehat{\mathcal{E}} \).
\begin{theorem}[Moment bound]
  \label{thm:moment-bound}
  \cite[Theorem 4.1]{13-FornasierHuetter-VarProperties}
  Let \( \omega \in \mathcal{P}_2(\mathbb{R}^d) \). 
  For \( 0 < r < q/2 \), the functional \( \widehat{\mathcal{E}} \) has uniformly bounded \( r \)th moments, \ie, 
  for each given \( M \geq 0 \), there exists an \( M' \geq 0 \) such that
  \begin{equation}
    \label{eq:10}
    \int_{\mathbb{R}^d} \left| x \right|^r \diff \mu(x) \leq M', \quad \text{for all } \mu \in \mathcal P(\mathbb R^d) \text{ such that } \widehat{\mathcal{E}}[\mu] \leq M.
  \end{equation}
\end{theorem}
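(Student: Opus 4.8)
The plan is to express the $r$th moment of $\mu$ as a Fourier-side integral carrying the \emph{same} kind of singular weight $|\xi|^{-d-\cdot}$ that defines $\widehat{\mathcal E}$, and then to dominate it by $\widehat{\mathcal E}[\mu]$ through a Cauchy--Schwarz estimate, after isolating the behaviour near $\xi=0$. The starting point would be the classical representation, valid for every $0<r<2$ (and hence for our range $0<r<q/2<1$), namely
\begin{equation*}
  |x|^r = c_{d,r}\int_{\mathbb R^d}\frac{1-\cos(x\cdot\xi)}{|\xi|^{d+r}}\diff\xi,\qquad c_{d,r}\in(0,\infty),
\end{equation*}
which is the $r$-stable analogue of Lemma \ref{lem:1}. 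Since the integrand is non-negative, Tonelli's theorem allows integrating against $\mu$ and swapping the order of integration, producing the identity (in $[0,\infty]$)
\begin{equation*}
  \int_{\mathbb R^d}|x|^r\diff\mu(x) = c_{d,r}\int_{\mathbb R^d}\frac{1-\Re\widehat\mu(\xi)}{|\xi|^{d+r}}\diff\xi,
\end{equation*}
using $\Re\widehat\mu(\xi)=\int\cos(x\cdot\xi)\diff\mu(x)$ and $\mu(\mathbb R^d)=1$.

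Next I would write $1-\Re\widehat\mu=(1-\Re\widehat\omega)+\Re(\widehat\omega-\widehat\mu)$. The first summand reproduces exactly $c_{d,r}\int(1-\Re\widehat\omega)|\xi|^{-d-r}\diff\xi=\int|x|^r\diff\omega(x)$, a finite constant independent of $\mu$ because $\omega\in\mathcal P_2(\mathbb R^d)$ and $r<2$. For the second summand I would estimate $|\Re(\widehat\omega-\widehat\mu)|\le|\widehat\omega-\widehat\mu|$ and split the $\xi$-integral at $|\xi|=1$. On $|\xi|\le1$, applying Cauchy--Schwarz against the weight of $\widehat{\mathcal E}$ gives
\begin{equation*}
  \int_{|\xi|\le1}\frac{|\widehat\omega-\widehat\mu|}{|\xi|^{d+r}}\diff\xi
  \le\Bigl(\int_{\mathbb R^d}\frac{|\widehat\mu-\widehat\omega|^2}{|\xi|^{d+q}}\diff\xi\Bigr)^{1/2}C_1
  \le\bigl(M/D_q\bigr)^{1/2}C_1,\quad C_1:=\Bigl(\int_{|\xi|\le1}|\xi|^{q-d-2r}\diff\xi\Bigr)^{1/2},
\end{equation*}
where the first factor is controlled by $\widehat{\mathcal E}[\mu]\le M$, and $C_1<\infty$ \emph{precisely because} $q-2r>0$, i.e. $r<q/2$. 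On $|\xi|>1$ I would instead use the trivial bound $|\widehat\mu|,|\widehat\omega|\le1$, so that $\int_{|\xi|>1}|\widehat\omega-\widehat\mu|\,|\xi|^{-d-r}\diff\xi\le 2\int_{|\xi|>1}|\xi|^{-d-r}\diff\xi=:C_2<\infty$, which converges because $r>0$. Combining the three contributions yields the asserted uniform bound
\begin{equation*}
  \int_{\mathbb R^d}|x|^r\diff\mu(x)\le\int_{\mathbb R^d}|x|^r\diff\omega(x)+c_{d,r}\bigl(C_1\sqrt{M/D_q}+C_2\bigr)=:M'.
\end{equation*}

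The one genuine obstacle is that a single global application of Cauchy--Schwarz fails: the residual weight $|\xi|^{q-d-2r}$ is integrable near the origin exactly when $r<q/2$, but is \emph{not} integrable at infinity. The decomposition at $|\xi|=1$ is what resolves this, trading the Fourier control provided by $\widehat{\mathcal E}$ (sharp near the singularity) for the elementary estimate $|\widehat\mu|\le1$ (sufficient at high frequencies, where $r>0$ alone gives integrability). In this way both hypotheses $0<r$ and $r<q/2$ enter in an essential and transparent manner; the remaining points --- justifying Tonelli and the finiteness and positivity of $c_{d,r}$ --- are standard and I would treat them only briefly.
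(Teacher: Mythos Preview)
Your proof is correct and follows essentially the same route as the paper's: represent the $r$th moment on the Fourier side, split the $\xi$-integral at $|\xi|=1$, apply Cauchy--Schwarz near the origin (where $r<q/2$ makes the residual weight $|\xi|^{-d+(q-2r)}$ locally integrable), and use the trivial bound $|\widehat\mu|\le1$ at infinity (where $r>0$ suffices). Your use of the explicit non-negative representation $|x|^r=c_{d,r}\int(1-\cos(x\cdot\xi))|\xi|^{-d-r}\diff\xi$ together with Tonelli is in fact a slight streamlining, since it makes the Fourier identity rigorous from the outset and bypasses the approximation argument the paper invokes when applying Lemma~\ref{lem:1} to $\gamma=\widehat\mu-1\notin\mathcal S$.
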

\begin{proof}[Sketch of the proof]
  By the assumptions that \( \widehat{\mathcal{E}}[\mu] \leq M \) and \( \omega \in \mathcal{P}_2(\mathbb{R}^d) \), 
  and from the estimate
  \begin{equation*}
    \left| \widehat{\mu}(\xi) - 1 \right|^2 \le 2 \left| \widehat{\mu}(\xi) - \widehat{\omega}(\xi) \right|^2 + 2 \left| \widehat{\omega}(\xi) - 1 \right|^2 
  \end{equation*}
  we deduce the bound
  \begin{equation*}
    \int_{\mathbb{R}^d} \left| \widehat{\mu}(\xi) - 1 \right|^2 \, \left| \xi \right|^{-d-q} \diff \xi \leq M'' := 2\left(D_q^{-1} M +  \int_{\mathbb{R}^d} \left| 1-\widehat{\omega}(\xi) \right|^2 \, \left| \xi \right|^{-d-q} \diff \xi \right).
  \end{equation*}
  We now want to use Lemma \ref{lem:1}, where we formally set \( \widehat{\gamma} =\mu - \delta_0 \) or \( \gamma =\widehat{\mu} - 1 \). Of course, in general this \( \gamma  \) will not be in \( \mathcal{S}(\mathbb{R}^d) \), 
but let us for the moment argue that by an approximation argument one could extend Lemma \ref{lem:1} also to differences of probability measures. Then we would have formally the following estimates
  \begin{align}
    \int_{\mathbb{R}^d} \left| x \right|^r \diff \mu(x) = {} & \int_{\mathbb{R}^d} \left| x \right|^{r} \widehat{\gamma}(x) \diff x \nonumber \\
    = {} & C \int_{\mathbb{R}^d} \left| \xi \right|^{-d-r} \gamma(\xi) \diff \xi \qquad \text{(Lemma \ref{lem:1})} \nonumber \\
    \leq {} & C \Bigg[ \int_{\left| \xi \right| \leq 1} \underbrace{\left| \xi \right|^{-d-r}}_{\text{{\(= \left| \xi \right|^{-\frac{d-q+2r}{2}} \left| \xi \right|^{-\frac{d + q}{2}}\)}}} \left| \gamma(\xi) \right| \diff \xi + \underbrace{\int_{\left| \xi \right| > 1} \left| \xi \right|^{-d-r} \left| \gamma(\xi) \right| \diff \xi}_{\leq C < \infty} \Bigg]\label{eq:14}\\
    \leq {} & C \Bigg[ \underbrace{\left( \int_{\left| \xi \right| \leq 1} \left| \xi \right|^{-d+(q - 2r)} \diff \xi \right)^{1/2}}_{\smash{< \infty}} \left( \int_{\mathbb{R}^d} \left| \xi \right|^{-d-q} \left| \gamma(\xi) \right|^2 \diff \xi \right)^{1/2} + 1 \Bigg] \nonumber \\
    \leq {} & C \left[ \left( \int_{\mathbb{R}^d} \left| \xi \right|^{-d-q} \left| \gamma(\xi) \right|^2 \diff \xi \right)^{1/2} + 1 \right] \leq C\big((M'')^{1/2}+1\big), \nonumber
  \end{align}
  yielding the desired bound, where in \eqref{eq:14} we used H\"older’s inequality. This computation can  be made indeed rigorous by appropriate approximation arguments, for which we refer  to the proof of \cite[Theorem 4.1]{13-FornasierHuetter-VarProperties}.
\end{proof}

\section*{Acknowledgements}
MDF acknowledges the hospitality at the Zentrum Mathematik of the Technische Universität München during his visits. 
MDF is supported by the FP7-People Marie Curie CIG (Career Integration Grant) 
Diffusive Partial Differential Equations with Nonlocal Interaction in Biology and Social Sciences (DifNonLoc),
by the ``Ramon y Cajal'' sub-programme (MICINN-RYC) of the Spanish Ministry of Science and Innovation, Ref. RYC-2010-06412, 
and by the Ministerio de Ciencia e Innovaci\'{o}n, grant MTM2011-27739-C04-02.

MF is supported by the ERC-Starting Grant, project 306274-HDSPCONTR  ``High-Dimensional Sparse Optimal Control''.

MF's and DM's research was supported by the DFG Collaborative Research Center TRR 109, ``Discretization in Geometry and Dynamics''.

\bibliographystyle{abbrv}
\bibliography{BibKineticDitheringGF}

\end{document}